\def\todaysdate{9\textsuperscript{th} June 2021}
\definecolor{lightblue}{rgb}{0.8,0.8,1}
\numberwithin{equation}{section}
\numberwithin{figure}{section}
\definecolor{vdarkred}{rgb}{0.7,0,0}
\declaretheoremstyle[
  spaceabove=\topsep,
  spacebelow=\topsep,
  headpunct=,
  numbered=no,
  postheadspace=1ex,
  headfont=\color{vdarkred}\normalfont\bfseries,
  bodyfont=\normalfont\itshape,
]{colored}
\declaretheoremstyle[
  spaceabove=\topsep,
  spacebelow=\topsep,
  headpunct=,
  numbered=no,
  postheadspace=1ex,
  headfont=\normalfont\bfseries,
  bodyfont=\normalfont\itshape,
]{italic}
\declaretheoremstyle[
  spaceabove=\topsep,
  spacebelow=\topsep,
  headpunct=,
  numbered=no,
  postheadspace=1ex,
  headfont=\normalfont\bfseries,
  bodyfont=\normalfont\upshape,
]{upright}
\declaretheorem[style=italic,name=Theorem,numbered=yes,numberwithin=section]{thm}
\declaretheorem[style=italic,name=Lemma,numbered=yes,numberlike=thm]{lem}
\declaretheorem[style=italic,name=Proposition,numbered=yes,numberlike=thm]{prop}
\declaretheorem[style=italic,name=Corollary,numbered=yes,numberlike=thm]{coro}
\declaretheorem[style=upright,name=Definition,numbered=yes,numberlike=thm]{defn}
\declaretheorem[style=upright,name=Remark,numbered=yes,numberlike=thm]{rmk}
\declaretheorem[style=upright,name=Notation,numbered=yes,numberlike=thm]{notation}
\renewcommand*{\@seccntformat}[1]{\upshape\csname the#1\endcsname.\hspace{1ex}}
\renewcommand*{\section}{\@startsection{section}{1}{\z@}%
	{2.5ex \@plus 1ex \@minus 0.2ex}%
	{1.5ex \@plus 0.2ex}%
	{\normalfont\normalsize\bfseries}}
\renewcommand*{\subsection}{\@startsection{subsection}{2}{\z@}%
	{2.5ex \@plus 1ex \@minus 0.2ex}%
	{-1.5ex \@plus -0.2ex}%
	{\normalfont\normalsize\bfseries}}
\renewcommand*{\subsubsection}{\@startsection{subsubsection}{3}{\z@}%
	{2.5ex \@plus 1ex \@minus 0.2ex}%
	{-1.5ex \@plus -0.2ex}%
	{\normalfont\normalsize\bfseries}}
\renewcommand*{\paragraph}{\@startsection{paragraph}{4}{\z@}%
	{2.5ex \@plus 1ex \@minus 0.2ex}%
	{-1.5ex \@plus -0.2ex}%
	{\normalfont\normalsize\bfseries}}
\renewcommand*{\subparagraph}{\@startsection{subparagraph}{5}{\z@}%
	{2.5ex \@plus 1ex \@minus 0.2ex}%
	{-1.5ex \@plus -0.2ex}%
	{\normalfont\normalsize\slshape}}
\newcommand{\N}{\mathbb N}
\newcommand{\Z}{\mathbb Z}
\definecolor{dgreen}{RGB}{0,150,0}
\newcommand{\incl}[3][right]%
{%
\draw[<-,>=#1 hook] #2 to ($ #2!0.5!#3 $);
\draw[->,>=stealth'] ($ #2!0.5!#3 $) to #3;%
}
\newcommand{\inclusion}[5][right]%
{%
\draw[<-,>=#1 hook] #4 to ($ #4!0.5!#5 $) node[#2,font=\small]{#3};
\draw[->,>=stealth'] ($ #4!0.5!#5 $) to #5;%
}
\newcommand{\CC}{(C_1,...,C_n)}
\newcommand{\cC}{\mathcal{C}}
\newcommand{\cD}{\mathcal{D}}
\newcommand{\cN}{\mathcal{N}}
\newcommand{\cU}{\mathcal{U}}
\renewcommand{\geq}{\geqslant}
\renewcommand{\leq}{\leqslant}
\renewcommand{\footnoterule}{%
  \kern -3pt
  \hrule width \textwidth height 0.4pt
  \kern 2.6pt
}
\definecolor{dgreen}{RGB}{0,150,0}
\begin{document}
\title{\vspace{-12mm} \Large\bfseries Witten-Reshetikhin-Turaev invariants for 3-manifolds from Lagrangian intersections in configuration spaces }
\author{ \small Cristina Anghel \quad $/\!\!/$\quad \todaysdate\vspace{-3ex}}
\date{}
\maketitle
{
\makeatletter
\renewcommand*{\BHFN@OldMakefntext}{}
\makeatother
\footnotetext{\textit{Key words and phrases}: Quantum invariants, Topological models, Witten-Reshetikhin-Turaev invariants.}
}
\vspace{-8mm}
\begin{abstract}
In this paper we construct a topological model for the Witten-Reshetikhin-Turaev invariants for $3$-manifolds coming from the quantum group $U_q(sl(2))$, as graded intersection pairings of homology classes in configuration spaces. More precisely, for a fixed level $\cN \in \N$ we show that the level $\cN$ WRT invariant for a $3-$manifold is a state sum of Lagrangian intersections in a covering of a {\bf fixed} configuration space in the punctured disk. This model brings a new perspective on the structure of the level $\cN$ Witten-Reshetikhin-Turaev invariant, showing that it is completely encoded by the intersection points between certain Lagrangian submanifolds in a fixed configuration space, with additional gradings which come from a particular choice of a local system. This formula provides a new framework for investigating the open question about categorifications of the WRT invariants. 
\end{abstract}
\vspace{-5.15mm}
{\tableofcontents \vspace{-2mm}}

\vspace{-2mm}
\section{Introduction}\label{introduction}
\vspace{-3mm}
 After the discovery of the Jones polynomial for knots, the world of quantum invariants encountered a powerful development, provided by constructions due to Witten, Reshetikhin and Turaev. More precisely Witten \cite{Witt} predicted the existence of an extension of the Jones polynomial to $3$-manifolds and Reshetikhin-Turaev \cite{RT} provided an algebraic construction of such invariants.  They showed that the representation theory of the quantum group $U_q(sl(2))$ leads to invariants for links coloured with finite dimensional representations of this quantum group, called coloured Jones polynomials. Further on, for any level $\cN \in \N$, one can use linear combinations of coloured Jones polynomials with colours less than $\cN$ in order to get a $3$-manifold invariant $\tau_{\cN}$. However, there are open questions about the geometry and topology which is contained in the Witten-Reshetikhin-Turaev invariants. An active research area concerns categorifications for invariants of links and $3-$manifolds. For instance, Khovanov homology, which is a categorification for the Jones polynomial for knots, was proved to be a powerful tool which contains much information (\cite{K},\cite{Ras},\cite{KM},\cite{OZ},\cite{SM},\cite{M1}). The story is different for the analogous invariants for $3$-manifolds. There is an important open question about the existence of categorifications for Witten-Reshetikhin-Turaev invariants.

Our aim is to describe these invariants as intersection pairings between homology classes in coverings of configuration spaces. We refer to such a description as ``topological model''. The main result of the paper shows that the level $\cN$ WRT invariant is a state sum of graded intersections between Lagrangian submanifolds in a fixed configuration space. This provides a new framework for the study of these invariants and a starting point in investigating categorification questions. 

In the first part of this article, Theorem \ref{THEOREMM}, we generalise the author's previous work (\cite{Cr3},\cite{Cr5}) constructing a topological model for coloured Jones polynomials coloured with different colours. Then, the translation of the algebraic definition of the WRT invariant using Theorem \ref{THEOREMM} would show that the WRT invariant $\tau_{\cN}$ is a linear combination of Lagrangian intersections in various configuration spaces. Further on, the main part of the paper is geometric. We encode the coefficients of the coloured Jones polynomials coming from the Kirby colour by adding certain circles to the supports of the Lagrangian submanifolds as well as adding extra punctures to the punctured disk. Then, we show that we can move the whole intersection formula -- which a priori would be in different configuration spaces -- in a fixed configuration space, as presented in Theorem \ref{THEOREMW}. 
\subsection{Homological tools} For $n,m \in \N$, we define $C_{n,m}=Conf_m(\mathscr D_n)$ to be the unordered configuration space of $m$ points in the $n$-punctured disc $\mathscr D_n$. We use two extra parameters $k,\bar{l}\in \N$ and define a local system:
$$\Phi: \pi_1(C_{n+3\bar{l},m}) \rightarrow \Z^n \oplus \Z^{\bar{l}} \oplus \Z.$$
 The definition of this local system depends on the parameter $k$. Roughly speaking, the monodromy around each puncture gives us one variable and the last $\Z-$component counts the winding of particles in the configuration space. The parameter $k$ is used for orientation purposes: the monodromies  of $\Phi$ around the first $n-k$ punctures and the last $k$ punctures are counted with opposite orientations. In our model $\bar{l}$ will be the number of link components. The extra $3\bar{l}$ punctures will play an important role in the model for the WRT-invariants. We define $\tilde{C}^{-k}_{n+3\bar{l},m}$ to be the covering of $C_{n,m}$ corresponding to $\Phi$. 
 We use the homology of a quotient of this covering space (quotienting the first $n$ components of the local system towards $l$ variables, for $l \leq n$), as follows: 
\vspace{-2mm}
\begin{enumerate}
\item[•] Lawrence representations $H^{-k}_{n,m,\bar{l}}$ which are $\Z[x_1^{\pm 1},...,x_l^{\pm 1},y_1^{\pm 1}..., y_{\bar{l}}^{\pm 1}, d^{\pm 1}]$-modules.\\
They come from the Borel-Moore homology of $\tilde{C}^{-k}_{n+3\bar{l},m}$ and have an action of coloured braids on $n+3\bar{l}$ strands (Definition \ref{D:4}, Proposition \ref{colbr})
\item[•] Dual Lawrence representations $H^{-k,\partial}_{n,m,\bar{l}}$ (Definition \ref{D:4})\\ 
(using the homology relative to the boundary of the same covering space) 
\item[•] Graded intersection pairing (Proposition \ref{P:3'}):
\begin{equation*}
\left\langle , \right\rangle:H^{-k}_{n,m,\bar{l}} \otimes H^{-k,\partial}_{n,m,\bar{l}}\rightarrow \Z[x_1^{\pm 1},...,x_l^{\pm 1},y_1^{\pm 1}..., y_{\bar{l}}^{\pm 1}, d^{\pm 1}].
\end{equation*}
\end{enumerate}
\vspace{-2mm}
{\bf Homology classes} We will construct certain classes in these homology groups, given by lifts of Lagrangian submanifolds in the base configuration space. These submanifolds are encoded by ``geometric supports'' which are sets of arcs in the punctured disc. The product of these arcs quotiented to the unordered configuration space gives the Lagrangian submanifolds. Then, the lifts in the covering will be encoded by sets of ``paths to the base points'' which are collections of arcs in the punctured disk, from the base point towards the geometric support. 
\begin{rmk}\label{pairing} \ \ 
The pairing is encoded in the base configuration space, and it is parametrised by the intersection points between the geometric supports of the homology classes, graded by monomials which are prescribed by the local system $\Phi$.
\end{rmk}
\subsection{Topological model coloured Jones polynomials}
 In the author's earlier work, \cite{Cr3} and \cite{Cr5}, a topological model for the coloured Jones polynomial for links coloured with the same colour was constructed (i.e. each component is coloured with the same colour). In the first part of this paper we generalize this result and construct a topological model for coloured Jones polynomials for links coloured with different colours. 
Let $L$ be an oriented framed link with framings $f_1,...,f_l\in \Z$. We consider $\beta_n \in B_n$ a braid such that $L= \widehat{\beta_n}$ by braid closure. 
Now, let us fix a set of colours $N_1,...,N_l\in \N$ for the strands of the link. This colouring induces a colouring of the strands of the braid:
$(C_1,...,C_n).$

We use the configuration space of $1+\sum_{i=1}^{n} (C_i-1)$ particles in the $(2n+1)$-punctured disk, and a $\mathbb Z^{2n+1}\oplus \Z$ local system constructed as above, with $k=n$ and $\bar{l}=0$. Then, we have the homologies:
$$H^{-n}_{2n+1, {1+\sum_{i=1}^{n}}(C_i-1),0} \text { and } H^{-n,\partial}_{2n+1, {1+\sum_{i=1}^{n}}(C_i-1),0} \text{ which are } \Z[x_1^{\pm 1},...,x_l^{\pm 1}, d^{\pm 1}] \text{-modules.}$$
\begin{defn}(Coloured Homology classes)
With the procedure described above, for any indices $i_1,...,i_{n}\in \N$ such that $0 \leq i_k \leq C_k-1$ for all $k\in \{1,...,n\}$ we define two Lagrangian submanifolds and consider the classes given by their lifts in the covering, as presented in figure \ref{Picture0}:
$${\color{red} \mathscr F_{\bar{i}}^{\CC} \in H^{-n}_{2n+1, {1+\sum_{i=1}^{n}}(C_i-1),0}} \ \ \ \ \ \ \ \ \text{ and }\ \ \ \ \ \ \ \ \  {\color{dgreen} \mathscr L^{\CC}_{\bar{i}}\in H^{-n,\partial}_{2n+1,1+\sum_{i=1}^{n} (C_i-1),0}}.$$
The set of such sequences of indices is denoted by $C(\bar{N})$.
\end{defn}
\begin{thm}\label{THEOREMM}(Topological state sum model for coloured Jones polynomials for coloured links)\\
Let us fix a set of colours $N_1,..,N_l \in \N$. Then, the coloured Jones polynomial of $L$ coloured with colours $N_1,...,N_l$ has the following model:
\begin{equation}
\begin{aligned}
J_{N_1,...,N_l}(L,q)& =~ q^{ \sum_{i=1}^{l}\left( f_i- \sum_{j \neq i} lk_{i,j}\right)(N_i-1)} \cdot \\
 & \cdot \left(\sum_{\bar{i}\in C(\bar{N})} \left( \prod_{i=1}^{n}x^{-1}_{C(i)} \right)\cdot  \left\langle(\beta_{n} \cup {\mathbb I}_{n+1} ) \ { \color{red} \mathscr F_{\bar{i}}^{\CC}}, {\color{dgreen} \mathscr L_{\bar{i}}^{\CC}}\right\rangle \right)\Bigm| _{\psi^{C}_{q,N_1,...,N_l}}.
\end{aligned}
\end{equation} 
In this expression $\psi^{C}_{q,N_1,...,N_l}$ is the specialisation of variables to one variable from formula \eqref{eq:8''''} and $C:\{1,...,2n+1\}\rightarrow \{1,...,l\}$ is the colouring presented in equation \eqref{eq:col} and remark \ref{not'}.

\end{thm}
Note that this formula is a state sum of intersections in a configuration space where the number of particles depends on the choice of individual colours $N_1,..,N_l$ for colouring the link.
\subsection{Topological model for WRT invariants}
The second part of the paper is devoted to the construction of a topological model for the Witten-Reshetikhin-Turaev $3$-manifold invariants. 
Let us fix a level $\cN\in \N$ and let us consider the $2\cN^{th}$ root of unity $\xi=e^{\frac{2 \pi i}{2\cN}}$. We will use the description of closed oriented $3$-manifolds as surgeries along framed oriented links. In turn, we will look at links as closures of braids. Suppose that the corresponding link has $l$ components and the braid has $n$ strands.

We start with the construction of the homology classes in this context. This time we use a covering of the configuration space of $n(\cN-2)+l+1$ particles in the $(2n+3l+1)-$punctured disk and a $\mathbb Z^{2n+1}\oplus \mathbb Z^{3l}\oplus \Z$ local system constructed as above, with $k=n$ and $\bar{l}=l$. We consider the homology groups:
$$H^{-n}_{2n+1,n(\cN-2)+l+1,l} \text { and } H^{-n,\partial}_{2n+1,n(\cN-2)+l+1,l} \text{ which are } \Z[x_1^{\pm 1},...,x_l^{\pm 1},y_1^{\pm 1}..., y_l^{\pm 1}, d^{\pm 1}] \text{-modules.}$$
\vspace{-3mm}
\begin{defn} (Homology classes) Let us fix a set of indices $i_1,...,i_{n} \in \{0,...,\cN-2\}$ and denote by $\bar{i}:=(i_1,...,i_n)$.  We consider the classes given by the geometric supports from the picture below:
$$ {\color{red} \mathscr F_{\bar{i}}^{\cN} \in H^{-n}_{2n+1,n(\cN-2)+l+1,l}} \ (\text{definition } \ref{D:C1}) \ \ \ \ \ \ \ \ \ \ \ \ \ \ \ {\color{dgreen} \mathscr L_{\bar{i}}^{\cN} \in H^{-n,\partial}_{2n+1,n(\cN-2)+l+1,l}} \ ( \text{definition }  \ref{D:C2})$$
\vspace{-8mm}
\begin{figure}[H]
\centering
\includegraphics[scale=0.25]{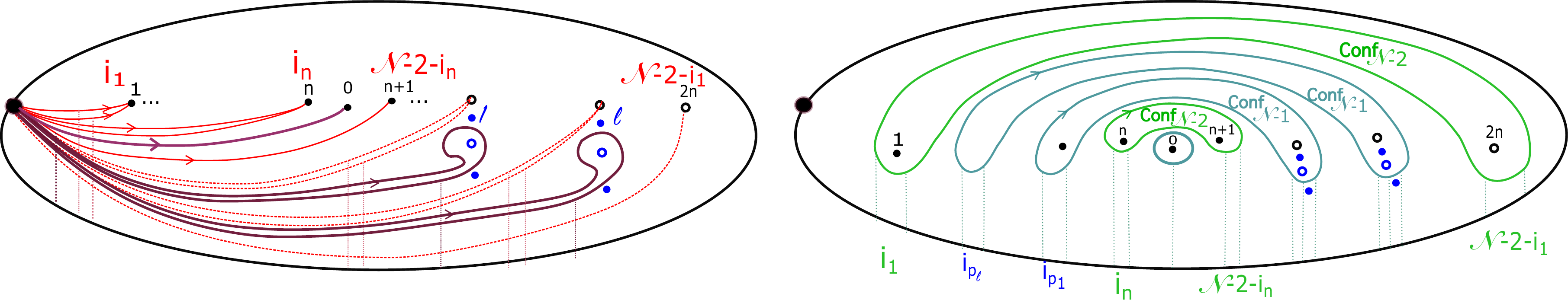}
\caption{WRT Homology Classes}
\label{Localsyst}
\end{figure}
\end{defn}
Denote by $p_1,...,p_l$ a sequence of strands of the braid that correspond to different components of the link and denote by $f_{p_i}$ the framing of the component associated to $p_i$.
\begin{defn}(Lagrangian intersection in the configuration space)\\
For a multi-index $i_1,...,i_{n} \in  \{0,...,\cN-2\}$, we consider the following Lagrangian intersection:
\begin{equation}
\begin{cases}
& \Lambda_{\bar{i}}(\beta_n) \in \Z[x_1^{\pm 1},...,x_l^{\pm 1},y_1^{\pm 1}..., y_l^{\pm 1}, d^{\pm 1}]\\
& \Lambda_{\bar{i}}(\beta_n):=\prod_{i=1}^l x_{C(p_i)}^{ \left(f_{p_i}-\sum_{j \neq {p_i}} lk_{p_i,j} \right)} \cdot \prod_{i=1}^n x^{-1}_{C(i)} 
  \   \left\langle(\beta_{n} \cup {\mathbb I}_{n+3l+1} ) \ { \color{red} \mathscr F_{\bar{i}}^{\cN}}, {\color{dgreen} \mathscr L_{\bar{i}}^{\cN}}\right\rangle \end{cases}
\end{equation} 
\end{defn}
The main result shows that the $\cN^{th} $WRT invariant $\tau_{\cN}(M)$ comes from a state sum of specialisations of these intersections, which take place in the configuration space $Conf_{n(\cN-2)+l+1}(\mathscr D_{2n+3l+1})$. 
\begin{thm}\label{THEOREMW}(Topological state sum model for the Witten-Reshetikhin-Turaev invariants)\\
Let $\cN \in \N$ be a fixed level and $M$ a closed oriented $3$-manifold.   We consider $L$ a framed oriented link with $l$ components such that $M$ is obtained by surgery along $L$. Also, let $\beta_n \in B_n$ such that $L=\widehat{\beta_n}$ as above. Then the $\cN^{th}$ Witten-Reshetikhin-Turaev invariant has the following model:
\begin{equation}
\begin{aligned}
\tau  _{\cN}(M)=\frac{\{1\}^{-l}_{\xi}}{\cD^b \cdot \Delta_+^{b_+}\cdot \Delta_-^{b_-}}\cdot {\Huge{\sum_{i_1,..,i_n=0}^{\cN-2}}}   \left(\sum_{\substack{ \tiny 1 \leq N_1,...,N_l \leq \cN-1 \\ i_k\leq C_k-1}}  \Lambda_{\bar{i}}(\beta_n) \Bigm| _{\psi^{C}_{\xi,N_1,...,N_l}}\right).
\end{aligned}
\end{equation} 
\end{thm}
In this expression $\psi^{C}_{\xi,N_1,...,N_l}$ is a specialisation of variables to complex numbers (see relation \eqref{not}). The coefficients in the above formula are presented in notation \ref{coefffrac}.
\begin{rmk} (Intersections in various configuration spaces) For a fixed colour $\cN $, the algebraic definition of the WRT invariant $\tau_\cN(M) $ is given by a certain linear combination of $J_{N_1,...,N_l}(L,\xi)$ for all $N_1,...,N_l\in \{1,...,\cN-1\}$. Then, Theorem \ref{THEOREMM} would interpret this invariant as follows:
\begin{equation*}
\begin{aligned}
\tau_\cN(M) & \text{ is a linear combination over all } N_1,...,N_l\in \{1,...,\cN-1\}\\
& \text{ and all } \bar{i}=(i_1,...,i_{n}) \text{ with } 0\leq i_k \leq C_k-1, k \in \{1,...,n\}   \ \text{of}\\
& \left\langle(\beta_{n} \cup {\mathbb I}_{n+1} ) \ { \color{red} \mathscr F_{\bar{i}}^{\CC}}, {\color{dgreen} \mathscr L_{\bar{i}}^{\CC}}\right\rangle \Bigm| _{\psi^{C}_{\xi,N_1,...,N_l}}.
\end{aligned}
\end{equation*}
Each term above is an intersection in the configuration space of $1+\sum_{i=1}^{n} (C_i-1)$ particles in the $(2n+1)$-punctured disk, which depends on the choice of colours $N_1,...,N_l$. 

This means that the translation of the algebraic definition of the WRT invariant following Theorem \ref{THEOREMM} shows that $\tau_\cN(M)$ is a linear combination of Lagrangian intersections in different configuration spaces $C_{2n+1,k}$, where the number of particles $k$ varies between $1$ and $(n-1)(\cN-1)+1$.
\end{rmk}
\begin{rmk} (Intersection in a fixed configuration space)
A feature of the model presented in Theorem \ref{THEOREMW} is that it globalises all these intersections from above, showing that the $\cN^{th}$ WRT invariant is given by states of certain Lagrangian intersections in a fixed ambient space.
\begin{equation*}
\begin{aligned}
\tau_\cN(M) & \text{ is a scalar times the state sum over all multi-indices } i_1,...,i_n\in \{0,...,\cN-2\} \text{ of }\\
& \text{specialisations of the intersection } \Lambda_{\bar{i}}(\beta_n) \  \text{corresponding to } N_1,...,N_l\in \{1,...,\cN-1\} \\
& \text{ such that }i_k \leq C_k-1, k \in \{1,...,n\}, \text{ namely: } \Lambda_{\bar{i}}(\beta_n) \Bigm| _{\psi^{C}_{\xi,N_1,...,N_l}}.
\end{aligned}
\end{equation*}
All the intersections $\Lambda_{\bar{i}}(\beta_n)$ above are constructed from the classes $(\beta_{n} \cup {\mathbb I}_{n+3l+1} ) \ \mathscr F_{\bar{i}}^{\cN}$ and $\mathscr L_{\bar{i}}^{\cN}$ and take place in the fixed configuration space of $n(\cN-2)+l+1$ points in the $(2n+3l+1)$-punctured disk. 
\end{rmk}
\begin{rmk} (Encoding the Kirby colour) Now, we discuss the coefficients which appear in the algebraic definition of the Witten-Reshetikhin-Turaev invariant for a $3$-manifold. This formula is given as linear combinations of coloured Jones polynomials of the underlying link $L$ and the coefficients come from the so-called Kirby colour and they are quantum integers. 

Theorem \ref{THEOREMW} provides a globalised formula for $\tau_\cN(M) $ and does not require individual coloured Jones polynomials. For each multi-index $\bar{i}$ bounded by the level, we consider the intersection form $\Lambda_{\bar{i}}(\beta_n)$ between globalised classes in a covering of the configuration space, which does not depend on any colouring. Then, we have to add up its specialisations, corresponding to colours which are ``bigger'' than the index $\bar{i}$. 

The coefficients coming from the Kirby colour are encoded in the homology classes. Geometrically, they are given precisely by the special $l$ purple circles and $l$ blue circles from the supports of the homology classes and they correspond to the orange intersection points from figure \ref{IntersectionForm}.
\end{rmk}
\clearpage
\subsection{Structure of the WRT invariants} Following this remark together with the fact that the intersection pairing is encoded by graded geometric intersections in the base configuration space (remark \ref{pairing}), we conclude that we have a topological formula for the WRT invariant which is obtained from the intersection points between the following geometric supports:
$$(\beta_{n} \cup {\mathbb I}_{n+3l+1} ) \ { \color{red} \mathscr F_{\bar{i}}^{\cN}} \cap  {\color{dgreen} \mathscr L_{\bar{i}}^{\cN}}$$
for all choices of indices $i_1,...,i_n\in \{1,...,\cN-2\}$, graded using the local system $\Phi$. 
\begin{rmk}
 In this way, we see that the WRT invariant at level $\cN$ is completely encoded by the set of intersection points between certain Lagrangian submanifolds in the configuration space of $n(\cN-2)+l+1$ points in the $(2n+3l+1)$-punctured disk.The number of particles is fixed and it is determined by the level of the invariant $\cN$, the number of components of the link $l$ and number of strands of the braid $n$.
\end{rmk}
\begin{figure}[H]
\centering
\includegraphics[scale=0.38]{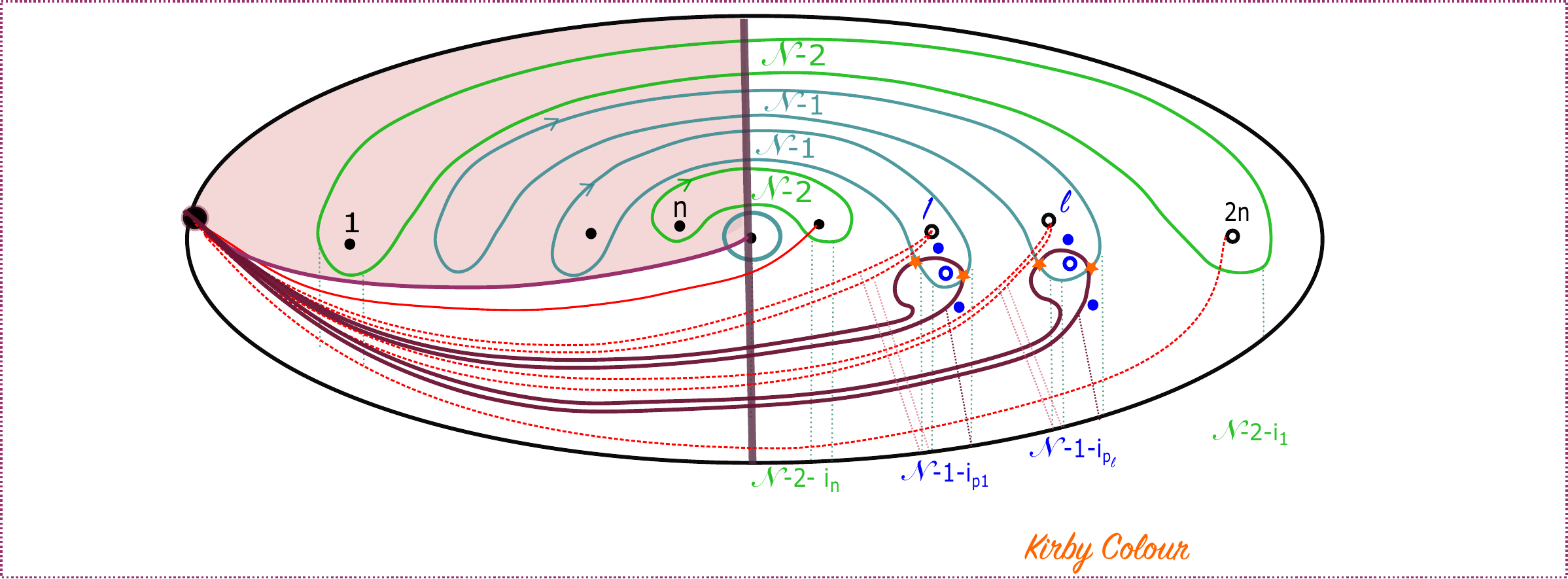}
\caption{Lagrangian Intersection encoding the Kirby colour}
\label{IntersectionForm}
\end{figure}
\vspace{-5mm}
\subsection{Questions-underlying topological information} Our main motivation for this work is the understanding of the underlying topology which is carried by the Witten-Reshetikhin-Turaev invariants. The structural description presented above, provided by intersections between Lagrangians in a fixed configuration space, brings a new approach to investigating further questions concerning categorifications for these quantum invariants. 

\subsection*{Structure of the paper} This article has three main parts. In Section \ref{S:3} we introduce the homological setting that we use as well as the particular choice of a local system and the corresponding covering space and homology groups. In the second part, we construct certain homology classes and, using those, we prove the topological intersection formula for the coloured Jones polynomials for links. The following section \ref{S:5} has two main parts. First, we construct a sequence of homology classes  in a fixed covering space and use them to define a state sum formula. Then, we prove that it leads to a topological model for the $\cN^{th}$ Witten-Reshetikhin-Turaev invariant. In the last part, Section \ref{S:6}, we present the formula for these invariants in the particular case where we have $3$-manifolds which are given by surgeries along knots. 

\subsection*{Acknowledgements} 
 This paper was prepared at the University of Oxford, and I acknowledge the support of the European Research Council (ERC) under the European Union's Horizon 2020 research and innovation programme (grant agreement No 674978). I would like to thank Jacob Rasmussen for discussions and comments on earlier versions of this paper. 
\clearpage
\section{Notations}
In the next sections we will change the variables from the ring of Laurent polynomials using certain specialisations of coefficients. For this, we use the following definition.
\begin{notation}(Specialisation)\label{N:spec}\\ 
Let $N$ be a module over a ring $R$. Let $R'$ be another ring and suppose that we have a specialisation of the coefficients, meaning a morphism:
$$\psi: R \rightarrow R'.$$
We denote by  
$$N|_{\psi}:=N \otimes_{R} R'$$
the specialisation of the module $N$ by the function $\psi$. 
\end{notation}

\begin{defn}(Quantum numbers)
$$ \{ x \}_q :=q^x-q^{-x} \ \ \ \ [x]_{q}:= \frac{q^x-q^{-x}}{q-q^{-1}}.$$
\end{defn}
\begin{defn}(Specialisations of coefficients)\\
For a set of $l$ colours $N_1,..,N_l\in\N$ and a colouring $C:\{1,...,n\}\rightarrow \{1,...,l\}$ we consider the specialisation of coefficients as below:
\begin{center}
\begin{tikzpicture}
[x=1.2mm,y=1.4mm]
\node (b1)               at (-27,0)    {$\Z[x_1^{\pm 1},...,x_n^{\pm 1},y_1^{\pm 1}..., y_{\bar{l}}^{\pm 1}, d^{\pm 1}]$};
\node (b2)   at (27,0)   {$\Z[x_1^{\pm 1},...,x_l^{\pm 1},y_1^{\pm 1}..., y_{\bar{l}}^{\pm 1}, d^{\pm 1}]$};
\node (b3)   at (0,-20)   {$\mathbb Z[q^{\pm1}]$};
,\draw[->]   (b1)      to node[xshift=1mm,yshift=5mm,font=\large]{$f_C$ \eqref{eq:8}}                           (b2);
\draw[->]             (b2)      to node[right,xshift=2mm,font=\large]{$\psi^{C}_{q,N_1,...,N_l}$\ \eqref{eq:8'}}   (b3);
\draw[->,thick,dotted]             (b1)      to node[left,font=\large]{}   (b3);
\end{tikzpicture}
\end{center}
\end{defn}
\begin{defn}(Our setting: specialisation corresponding to a braid closure)\\
We will use this change of coefficients in the situation where $n$ is replaced by $2n+1$ and the $2n$ points except the middle one inherit a colouring with $l$ colours coming from a braid closure of a braid with $n$ strands:
$$C:\{1,...,2n\}\rightarrow \{1,...,l\}.$$
Further on, we consider an extra point in the middle which we denote by $(n+1)$ and we colour it with the label: $$C(n+1)=1.$$
This together with the colouring of the $2n$ points from above gives us a colouring of $2n+1$ points:
\begin{equation}\label{eq:col}
C:\{1,...,2n+1\}\rightarrow \{1,...,l\}.
\end{equation}
For our model, we will use the function $f_C$ corresponding to the colouring from \eqref{eq:col}. Further on, we define the specialisation of coefficients:
$$ \psi^{C}_{q,N_1,...,N_l}: \Z[x_1^{\pm 1},...,x_{l}^{\pm 1},y_1^{\pm 1},...,y_{l}^{\pm 1}, d^{\pm 1}] \rightarrow \Z[q^{\pm 1}]$$
\begin{equation}\label{not}
\begin{cases}
&\psi^{C}_{q,N_1,...,N_l}(x_i)=q^{N_i-1}, \ i\in \{1,...,l\}\\
&\psi^{C}_{q,N_1,...,N_l}(y_i)=q^{N_i}\\
&\psi^{C}_{q,N_1,...,N_l}(d)=q^{-2}.
\end{cases}
\end{equation} 
\end{defn}
\begin{rmk}\label{not'}
In the formulas from the paper, we denote by $C_i:=N_{C(i)}$.
\end{rmk}
\section{Definition of the local system and homology groups}\label{S:3}

In order to construct the classes that will lead to the $3$-manifold invariants, we will use the homology of certain coverings of the configuration space in the punctured disk. The construction of the covering space will be more subtle than the one used in \cite{Cr5}. More specifically, we will consider two types of punctures and use a subtle local system which counts the monodromies around these punctures in different manners. 

For the following part, let us fix $\bar{l},k \in \N$. Also, we consider a ``weight'' $m \in \N$.
 We start with the unordered configuration space of $m$ points in the punctured disk with
  $n+3\bar{l}$ punctures $\mathscr D_{n+3\bar{l}}$, denoted by:
 $$C_{n+3\bar{l},m}.$$ 
Also, we fix $d_1,..d_m \in \partial \hspace{0.5mm}\mathscr D_{n+3\bar{l}}$ and let ${\bf d}=(d_1,...,d_m)$ to be our base point in the configuration space.
Now, we define a certain local system on this configuration space. 
For this, we use the homology of this configuration space, which has the following description.
\begin{prop}
Let us suppose that $m \geq 2$. Let $[ \ ]: \pi_1(C_{n+3\bar{l},m}) \rightarrow H_1\left( C_{n+3\bar{l},m}\right)$ be the abelianisation map. Then the homology has the following form:
\begin{equation*}
\begin{aligned}
H_1\left( C_{n+3\bar{l},m}\right)  \simeq \ \ \ \  \ & \Z^{n} \ \ \ \ \oplus \ \ \ \  \Z^{2\bar{l}} \ \ \ \ \ \oplus \ \ \ \ \Z^{\bar{l}} \ \ \ \  \oplus \ \ \ \ \Z\\
&\langle [\sigma_i] \rangle \ \ \ \ \  \langle [\gamma_j], [\bar{\gamma}_j] \rangle \ \ \ \ \ \ \ \langle [\eta_j] \rangle \ \ \ \ \ \ \ \langle [\delta]\rangle,  \ \ \ {i\in \{1,...,n\}}, j\in \{1,...,\bar{l}\}.
\end{aligned}
\end{equation*}
The five types of generators are presented in the picture below. 
\begin{figure}[H]
\centering
\includegraphics[scale=0.23]{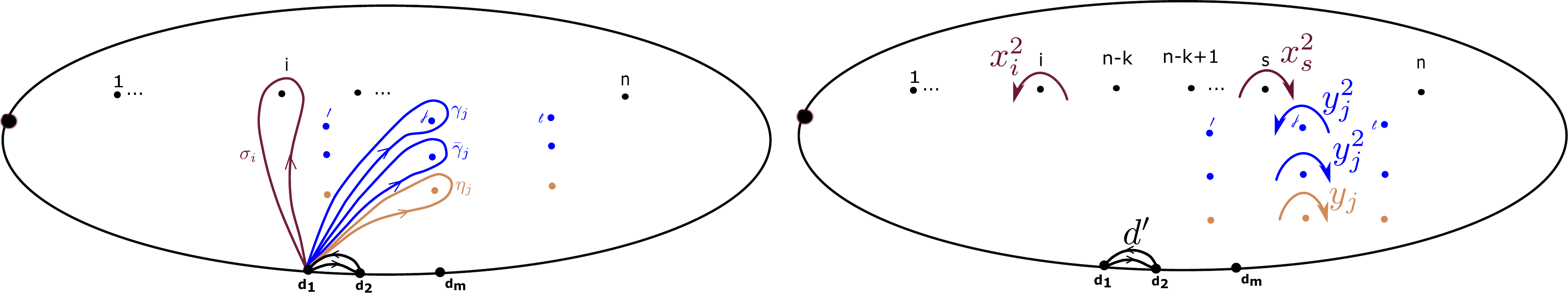}
\caption{Local system $\Phi$}
\label{Localsyst}
\end{figure}
\end{prop}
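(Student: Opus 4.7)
The plan is to reduce the claim to the standard computation of $H_1$ for unordered configuration spaces of a punctured disk: for any $k\geq 0$ and $m\geq 2$,
\begin{equation*}
H_1(C_{k,m}) \cong \Z^{k+1},
\end{equation*}
with $k$ basis elements given by ``puncture loops'' $\tau_p$ (one particle winds once around the $p$-th puncture while the others remain at fixed basepoints) and one additional basis element given by the common abelianisation class $[\delta]$ of the Artin half-twists $t_1,\ldots,t_{m-1}$ of the surface braid group $\pi_1(C_{k,m})$.

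To establish this general statement I would work from the standard presentation of $\pi_1(C_{k,m})$, with generators $\{t_i\} \cup \{\tau_p\}$ and the familiar relations: the braid relations $t_i t_{i+1} t_i = t_{i+1} t_i t_{i+1}$, the commutation relations $t_i t_j = t_j t_i$ for $|i-j|\geq 2$, and mixed relations controlling how a puncture loop is conjugated past a half-twist. Such a presentation can be derived from the Birman-style forgetful exact sequence for the map $\pi_1(C_{k,m}) \to B_m$ that fills in the $k$ punctures, or from a direct CW-model. On abelianising, the braid relation forces $[t_i]=[t_{i+1}]$ and so collapses all $t_i$ to a single class $[\delta]$; the commutation relations become automatic; and the mixed relations produce no further identifications among the puncture-loop classes. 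To verify the last point and confirm independence I would exhibit the map $H_1(C_{k,m}) \to \Z^{k+1}$ sending a loop $\gamma$ to $\bigl(w_1(\gamma),\ldots,w_k(\gamma),e(\gamma)\bigr)$, where $w_p$ is the algebraic winding number of the configuration around the $p$-th puncture and $e$ is the exponent sum of the underlying element of $B_m$; these $k+1$ homomorphisms detect $\tau_1,\ldots,\tau_k,[\delta]$ as a basis.

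With this in hand I would set $k = n+3l$ and relabel the puncture-loop basis according to the figure: the $n$ ``ordinary'' punctures contribute $[\sigma_1],\ldots,[\sigma_n]$, spanning the $\Z^n$-summand; within each of the $l$ triples of ``extra'' punctures, the three puncture-loops are labelled $[\gamma_j], [\bar{\gamma}_j]$ (contributing to $\Z^{2l}$) and $[\eta_j]$ (contributing to $\Z^l$); and $[\delta]$ gives the final $\Z$. The main obstacle is the verification that the mixed puncture/twist relations impose no further identifications in $H_1$, and this is precisely what the explicit winding-number detectors settle. Every other step is bookkeeping.
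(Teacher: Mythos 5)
Your argument is correct and is the standard one. Note that the paper itself gives no proof of this proposition — it is stated as a known fact about surface braid groups of planar surfaces — so there is no paper argument to compare against; you have simply supplied the omitted standard justification. The key points in your outline all hold: the Bellingeri-style presentation of $\pi_1(C_{k,m})$ for a $k$-punctured disk has generators $\{t_i\}_{i=1}^{m-1}\cup\{\tau_p\}_{p=1}^{k}$ with braid relations, far-commutation relations, and mixed relations; under abelianisation the braid relations identify all $[t_i]$ to a single class $[\delta]$, while each mixed relation (of the form $\sigma_1^{-1}\tau_i\sigma_1^{-1}\tau_i=\tau_i\sigma_1^{-1}\tau_i\sigma_1^{-1}$ or $\sigma_1^{-1}\tau_i\sigma_1\tau_j=\tau_j\sigma_1^{-1}\tau_i\sigma_1$) abelianises to a tautology, so no further collapsing occurs. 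Your detector map $\gamma\mapsto\bigl(w_1(\gamma),\ldots,w_k(\gamma),e(\gamma)\bigr)$, with $w_p$ the total winding of the configuration around the $p$-th puncture and $e$ the exponent sum under $\pi_1(C_{k,m})\to B_m\to\Z$, is a well-defined homomorphism taking the generators $[\tau_1],\ldots,[\tau_k],[\delta]$ to the standard basis of $\Z^{k+1}$, which establishes independence and hence the isomorphism $H_1(C_{k,m})\cong\Z^{k+1}$ for $m\geq 2$. Specialising $k=n+3l$ and partitioning the puncture loops into the $\Z^n$, $\Z^{2l}$, $\Z^l$ blocks as in the figure is, as you say, pure bookkeeping. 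The only caveat worth flagging is the hypothesis $m\geq 2$: for $m=1$ there are no Artin generators $t_i$, the $[\delta]$-summand disappears, and $H_1\cong\Z^k$; your statement correctly carries this hypothesis, but the role it plays (ensuring $[\delta]$ exists) deserves a sentence if this were written out in full.
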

\vspace{-5mm}
We continue with the augmentation map $$\epsilon: H_1\left( C_{n+3l,m}\right)\rightarrow \Z^n \oplus \Z^l \oplus \Z$$ 
$$ \hspace{30mm} \langle x_i \rangle \ \ \langle y_j\rangle \ \ \langle d' \rangle$$ given by:
\begin{equation}
\begin{cases}
&\epsilon(\sigma_i)=2x_i, i\in \{1,...,n-k\}\\
&\epsilon(\sigma_i)=-2x_i, i\in \{n-k+1,...,n\}\\
&\epsilon(\gamma_j)=2y_j, j\in \{1,...,\bar{l}\}\\
&\epsilon(\bar{\gamma}_j)=-2y_j, j\in \{1,...,\bar{l}\}\\
&\epsilon(\eta_j)=-y_j, j\in \{1,...,\bar{l}\}\\
&\epsilon(\delta)=d'.
\end{cases}
\end{equation}
\begin{defn}(Local system)\label{localsystem}
We use the local system given by the composition of the above morphisms:
\begin{equation}
\begin{aligned}
&\Phi: \pi_1(C_{n+3\bar{l},m}) \rightarrow \Z^n \oplus \Z^{\bar{l}} \oplus \Z\\
&\hspace{28mm} \langle x_j \rangle \ \ \langle y_j\rangle \ \ \langle d' \rangle, \ i \in \{1,...,n\}, \ j \in \{1,...,\bar{l}\}\\
&\Phi= \epsilon \circ [ \ ]. \ \ \ \ \ \ \ \ \ \ \ \ \ \ \ \ \ \ \ \ 
\end{aligned}
\end{equation}
\end{defn}

\begin{defn}(Covering of the configuration space)\label{D:12}\\
Let $\tilde{C}^{-k}_{n+3{\bar{l}},m}$ be the covering of $C_{n+3{\bar{l}},m}$ corresponding to the local system $\Phi$. 

Also, let us fix a base point ${\bf\tilde{d}}\in \tilde{C}^{-k}_{n+3{\bar{l}},m}$ in the fiber over the base point $\bf{d}$.
\end{defn}
\subsection{Input of the construction}
We will use the homologies of this covering space. They are modules over the group ring of deck transformations, $\Z[x_1^{\pm 1},...,x_n^{\pm 1},y_1^{\pm 1}..., y_{\bar{l}}^{\pm 1}, d'^{\pm 1}]$.

For computational purposes, we will use the variable $d:=-d'$ and we consider:
\begin{equation}
\begin{aligned}
&\gamma: \Z[x_1^{\pm 1},...,x_n^{\pm 1},y_1^{\pm 1}..., y_{\bar{l}}^{\pm 1}, d'^{\pm 1}] \rightarrow \Z[x_1^{\pm 1},...,x_n^{\pm 1},y_1^{\pm 1}..., y_{\bar{l}}^{\pm 1}, d^{\pm 1}]\\
&\begin{cases}
\bar{\Phi}(x_i)=x_i\\
\bar{\Phi}(y_j)=y_j\\
\bar{\Phi}(d')=-d.
\end{cases}
\end{aligned}
\end{equation}
Using this notation, the homology groups of the covering become modules over\\
 $\Z[x_1^{\pm 1},...,x_n^{\pm 1},y_1^{\pm 1}..., y_{\bar{l}}^{\pm 1}, d^{\pm 1}]$.
Further on, we use the induced map corresponding to the local system $\Phi$ with values in the group ring of $\Z^n \oplus \Z^{\bar{l}} \oplus \Z$ :
\begin{equation}
\Phi: \pi_1(C_{n+3{\bar{l}},m}) \rightarrow \Z[x_1^{\pm 1},...,x_n^{\pm 1},y_1^{\pm 1}..., y_{\bar{l}}^{\pm 1}, d^{\pm 1}].
\end{equation}
Then, taking into account the change of variables $\gamma$, we define:
\begin{equation}
\begin{aligned}
&\bar{\Phi}: \pi_1(C_{n+3{\bar{l}},m}) \rightarrow \Z[x_1^{\pm 1},...,x_n^{\pm 1},y_1^{\pm 1}..., y_{\bar{l}}^{\pm 1}, d^{\pm 1}]\\
&\bar{\Phi}=\gamma \circ \Phi. \ \ \ \ \ \ \ \ \ \ \ \ \ \ \ \ \ \ \ \ 
\end{aligned}
\end{equation}
\begin{defn}
We consider two submodules in the homologies of this covering space (which are modules over $\Z[x_1^{\pm 1},...,x_n^{\pm 1},y_1^{\pm 1}..., y_{\bar{l}}^{\pm 1}, d^{\pm 1}]$):
\begin{enumerate}
 \item[$\bullet$]  $\mathscr H^{-k}_{n,m,{\bar{l}}}\subseteq H^{\text{lf},\infty,-}_m(\tilde{C}^{-k}_{n+3{\bar{l}},m}, P^{-1};\Z)$ and 
 \item[$\bullet$]  $\mathscr H^{-k,\partial}_{n,m,{\bar{l}}} \subseteq H^{\text{lf},\Delta}_m(\tilde{C}^{-k}_{n+3{\bar{l}},m},\partial;\Z)$ 
\end{enumerate}
given by the images of the homology with twisted coefficients into the homology of the covering space, defined in an analogue manner as the homology groups from \cite{Cr5}-Section 3 (using the splitting of the boundary of the configuration space and its description from \cite{CrM}). 

\end{defn}
\begin{prop}(\cite{CrM})\label{P:3'''}
There exists a topological intersection pairing:
$$<< ~,~ >>: \mathscr H^{-k}_{n,m,{\bar{l}}} \otimes \mathscr H^{-k,\partial}_{n,m,{\bar{l}}}\rightarrow\Z[x_1^{\pm 1},...,x_n^{\pm 1},y_1^{\pm 1}..., y_{\bar{l}}^{\pm 1}, d^{\pm 1}].$$
\end{prop}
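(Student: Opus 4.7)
The plan is to construct $\langle\langle\cdot,\cdot\rangle\rangle$ as a graded geometric intersection pairing, generalising Poincaré–Lefschetz duality to the covering $\tilde{C}^{-k}_{n+3l,m}$ in the presence of the free deck group action by $\Z^n\oplus\Z^l\oplus\Z$. The submodules $\mathscr H^{-k}_{n,m,l}$ and $\mathscr H^{-k,\partial}_{n,m,l}$ are, by construction, generated by lifts of embedded Lagrangian submanifolds in the base configuration space $C_{n+3l,m}$, each arising from a product of disjoint arcs in the punctured disk symmetrised via the $m$-fold unordered quotient, together with the choice of a path from the basepoint ${\bf d}$ to the submanifold which selects a sheet in the cover. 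I would work throughout with such Lagrangian representatives, following the model of \cite{Cr5} but enlarged to accommodate the extra $3l$ punctures and the $y_j$, $d$ directions of $\Phi$.

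The pairing is defined on generators as follows. Given classes with geometric supports $\mathscr F$ and $\mathscr L$, I first perturb one of them by an isotopy of the disk (lifted fibrewise to the configuration space) so that the Lagrangians meet transversely in the base. At each intersection point $x\in \mathscr F\pitchfork \mathscr L$ I form a loop $\ell_x\in\pi_1(C_{n+3l,m},{\bf d})$ by concatenating the path-to-basepoint of $\mathscr F$, a short arc inside $\mathscr F$ ending at $x$, a short arc inside $\mathscr L$ emanating from $x$, and the reverse of the path-to-basepoint of $\mathscr L$. Applying the local system $\Phi$ from Definition \ref{localsystem} produces a monomial
$$w(x)=\prod_{i=1}^n x_i^{\alpha_i(x)}\prod_{j=1}^l y_j^{\beta_j(x)}\, d^{\gamma(x)}\in\Z[x_1^{\pm 1},\ldots,y_l^{\pm 1},d^{\pm 1}],$$
and I set
$$\langle\langle \mathscr F,\mathscr L\rangle\rangle \;:=\; \sum_{x\in \mathscr F\pitchfork \mathscr L}\mathrm{sgn}(x)\, w(x),$$
where $\mathrm{sgn}(x)=\pm1$ is the local intersection sign of the two oriented Lagrangians at $x$. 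Bilinear extension to $\mathscr H^{-k}_{n,m,l}\otimes \mathscr H^{-k,\partial}_{n,m,l}$ then produces the desired pairing, and equivariance is automatic: replacing the chosen lift of $\mathscr F$ by $\tau\cdot\tilde{\mathscr F}$ multiplies each $w(x)$ uniformly by the monomial $\tau$, so the formula is $\Z[x_i^{\pm1},y_j^{\pm1},d^{\pm1}]$-sesquilinear with respect to the deck action.

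Well-definedness reduces to a cobordism argument: any isotopy of Lagrangian representatives within the submodule produces a 1-parameter family whose non-generic moments introduce birth/death pairs of transverse intersection points, each pair cancelling in sign and carrying the same monodromy $w(x)$, so the signed monomial count is invariant. Independence of the path-to-basepoint is similar. The analogous story for cycles not of Lagrangian type is handled by approximating geometric representatives of $\mathscr H^{-k}_{n,m,l}$ and $\mathscr H^{-k,\partial}_{n,m,l}$ by the standard Lawrence-type generators and invoking linearity.

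The main obstacle is the finiteness/compactness step. Intersecting a Borel–Moore cycle that is open toward the punctures $P^{-1}$ with a relative cycle that is open toward the disk boundary $\partial$ could in principle escape to infinity in either direction, and the sum $\sum_x \mathrm{sgn}(x)w(x)$ would then not make sense. This is precisely why the pairing is defined on the submodules $\mathscr H^{-k}_{n,m,l}$ and $\mathscr H^{-k,\partial}_{n,m,l}$, whose end conditions are dual to each other (open only toward $P^{-1}$, respectively only toward $\partial$), so that $\mathscr F\cap \mathscr L$ is relatively compact and hence finite after transverse perturbation; matching these end conditions with the sign splitting in $\epsilon$ between the first $n-k$ and last $k$ punctures, as encoded in the parameter $-k$, is the delicate point that requires the splitting of $\partial C_{n+3l,m}$ described in \cite{CrM}.
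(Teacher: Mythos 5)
The paper does not give a proof of this proposition; it is imported wholesale from \cite{CrM}, and the only thing the paper adds is the computational recipe recorded in Proposition \ref{P:3} (transverse perturbation in the base, loops $l_x$ from paths-to-basepoint and interior arcs, signs, and $\Phi$-monomials). Your construction reproduces exactly that recipe, your equivariance and cobordism-invariance remarks are the standard verifications, and you correctly isolate the genuine technical point --- that a Borel--Moore class open toward $P^{-1}$ paired against a class relative to $\partial$ has compact, hence finite, transverse intersection precisely because the two end conditions are complementary, which is where the boundary splitting of $C_{n+3l,m}$ from \cite{CrM} is used. So your argument is consistent with, and somewhat more explicit than, what the paper says; there is no discrepancy to flag.

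One small caution: you phrase the deck-equivariance check as ``replacing the chosen lift of $\mathscr F$ by $\tau\cdot\tilde{\mathscr F}$ multiplies each $w(x)$ by $\tau$,'' which is correct, but you should make explicit that acting by $\tau$ on the second factor instead multiplies $w(x)$ by $\tau^{-1}$ (the path $\gamma_{X_2}$ enters $l_x$ reversed), so the pairing is linear in the first slot and conjugate-linear (under the involution $\tau\mapsto\tau^{-1}$ on the group ring) in the second; stating ``sesquilinear'' without naming the involution leaves this slightly ambiguous. Also, the step where you pass from Lagrangian generators to general classes by ``approximating geometric representatives by the standard Lawrence-type generators'' is really just the assertion that the submodules $\mathscr H^{-k}_{n,m,l}$ and $\mathscr H^{-k,\partial}_{n,m,l}$ are by definition spanned by such representatives (they are images of twisted homology inside the covering-space homology), so this is definitional rather than an approximation argument --- worth phrasing that way to avoid suggesting an analytic step that is not actually needed.
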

In the next section, we will use the exact form of this intersection pairing, so we will briefly explain its formula. 
Let us consider two classes $H_1 \in \mathscr H^{-k}_{n,m,{\bar{l}}}$ and $H_2 \in \mathscr  H^{-k,\partial}_{n,m,{\bar{l}}}$. We suppose that these classes are given by the lifts $\tilde{X}_1, \tilde{X}_2$ of two immersed submanifolds $X_1,X_2 \subseteq C_{n+3{\bar{l}},m}$. Also, we assume that $X_1$ and $X_2$ have a transverse intersection, in a finite number of points. 
\begin{prop}(Intersection pairing from intersections in the base space and the local system)\label{P:3}  For each intersection point $x \in X_1 \cap X_2$ we define a certain loop and denote it by $l_x \subseteq C_{n+3{\bar{l}},m}$. 
a) {\bf Construction of $l_x$}\\
 We suppose that we have the paths $\gamma_{X_1}, \gamma_{X_2}$ which start in $\bf d$, they end on $X_1$,$X_2$ respectively and that  
$\tilde{\gamma}_{X_1}(1) \in \tilde{X}_1$ and $ \tilde{\gamma}_{X_2}(1) \in \tilde{X}_2$.
Further on, we choose two paths $\delta_{X_1}, \delta_{X_2}:[0,1]\rightarrow C_{n+3{\bar{l}},m}$ with the property:
\begin{equation}
\begin{cases}
Im(\delta_{X_1})\subseteq X_1; \delta_{X_1}(0)=\gamma_{X_1}(1);  \delta_{X_1}(1)=x\\
Im(\delta_{X_2})\subseteq X_2; \delta_{X_2}(0)=\gamma_{X_2}(1);  \delta_{x_2}(1)=x.
\end{cases}
\end{equation}
The composition of these paths gives us the loop:
$$l_x=\gamma_{X_1}\circ\delta_{X_1}\circ \delta_{X_2}^{-1}\circ \gamma_{X_2}^{-1}.$$
Also, let $\alpha_x$ be the sign of the geometric intersection between $M_1$ and $M_2$ in the base configuration space, at the point $x$.\\
b) {\bf Intersection form}\\
 Then, the intersection pairing can be computed from the set of loops $l_x$ and the local system:
\begin{equation}\label{eq:1}  
<<H_1,H_2>>=\sum_{x \in X_1 \cap X_2} \alpha_x \cdot \Phi(l_x) \in \Z[x_1^{\pm 1},...,x_n^{\pm 1},y_1^{\pm 1}..., y_{\bar{l}}^{\pm 1}, d^{\pm 1}].
\end{equation}
\end{prop}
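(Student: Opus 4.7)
The plan is to reinterpret the topological pairing of Proposition~\ref{P:3'''} as an equivariant count of signed intersections in the covering $\tilde{C}^{-k}_{n+3l,m}$, and then identify the deck transformation attached to each base intersection point with $\Phi(l_x)$.

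\textbf{Step 1: Equivariant reformulation.} The submodules $\mathscr H^{-k}_{n,m,l}$ and $\mathscr H^{-k,\partial}_{n,m,l}$ are modules over the deck group $G := \Z^n\oplus\Z^l\oplus\Z$, and the pairing of Proposition~\ref{P:3'''} is $G$-equivariant with values in $\Z[G]$. For cycles represented by the lifts $\tilde X_1,\tilde X_2$, the pairing must then be the equivariant intersection
$$<<H_1, H_2>> \;=\; \sum_{g \in G} \bigl(\tilde X_1 \cdot (g\cdot \tilde X_2)\bigr)_{\tilde C^{-k}_{n+3l,m}} \cdot g,$$
where $(\cdot)$ denotes the signed transverse intersection in the covering. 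Because the projection $p\colon \tilde C^{-k}_{n+3l,m}\to C_{n+3l,m}$ is a local homeomorphism, for any base intersection point $x\in X_1\cap X_2$ the preimage $p^{-1}(x)$ meets $\tilde X_1$ in a single point $\tilde x$ and each translate $g\cdot\tilde X_2$ in a single point. Thus exactly one $g\in G$ contributes via each $x$, and the finiteness of $X_1\cap X_2$ forces the sum to be finite.

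\textbf{Step 2: Identifying the deck transformation.} Fix $x\in X_1\cap X_2$ and let $\tilde x\in\tilde X_1$, $\tilde x'\in \tilde X_2$ be the unique lifts of $x$ on the chosen sheets. Let $g_x\in G$ be defined by $g_x\cdot \tilde x' = \tilde x$; the claim is $g_x = \Phi(l_x)$. To see this, I lift the loop $l_x=\gamma_{X_1}\circ\delta_{X_1}\circ \delta_{X_2}^{-1}\circ \gamma_{X_2}^{-1}$ starting at $\tilde{\mathbf d}$. By hypothesis the lift of $\gamma_{X_1}$ ends at $\tilde\gamma_{X_1}(1)\in\tilde X_1$; the lift of $\delta_{X_1}$ then continues inside $\tilde X_1$ (it is a path in $X_1$ emanating from a point of $\tilde X_1$) and terminates at $\tilde x$. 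The lift of $\delta_{X_2}^{-1}$ starts at $\tilde x = g_x\cdot\tilde x'$, so by $G$-equivariance of the covering it stays inside $g_x\cdot \tilde X_2$ and ends at $g_x\cdot\tilde \gamma_{X_2}(1)$. Finally the lift of $\gamma_{X_2}^{-1}$ lands at $g_x\cdot\tilde{\mathbf d}$, and by the defining property of the local system this endpoint equals $\Phi(l_x)\cdot \tilde{\mathbf d}$, so $\Phi(l_x)=g_x$.

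\textbf{Step 3: Signs and conclusion.} Since $p$ is a local homeomorphism preserving the local orientations of the smooth strata of $X_1$ and $X_2$, the local sign of the transverse intersection of $\tilde X_1$ and $g_x\cdot \tilde X_2$ at $\tilde x$ coincides with the base sign $\alpha_x$. Combining Steps~1 and~2 yields formula~\eqref{eq:1}. The step I expect to be the main obstacle is not the bookkeeping but the verification that the equivariant intersection is genuinely well defined on the submodules $\mathscr H^{-k}_{n,m,l}$ and $\mathscr H^{-k,\partial}_{n,m,l}$: this requires the Borel--Moore/relative-to-boundary decay of the representing cycles together with the splitting of the configuration-space boundary from \cite{CrM}, which ensures that transverse representatives exist and that the equivariant sum is finite and agrees with the geometric count described above.
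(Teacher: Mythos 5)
The paper does not actually prove Proposition~\ref{P:3}: it is stated without argument, as a computational recipe extracted from the pairing whose existence (Proposition~\ref{P:3'''}) is cited to~\cite{CrM}. Your proof is the standard derivation of such an intersection formula and is essentially correct: the equivariant reformulation in Step~1, the path-lifting identification $g_x=\Phi(l_x)$ in Step~2, and the sign argument in Step~3 are all the right ingredients, and your closing caveat correctly locates where the genuine technical content lies (well-definedness of the pairing on the Borel--Moore/relative submodules, which the paper outsources to~\cite{CrM}).

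One small imprecision worth flagging: in Step~1 you assert that $p^{-1}(x)$ meets $\tilde X_1$ in a single point ``because the projection is a local homeomorphism.'' That is not the reason. The fiber $p^{-1}(x)$ is infinite, and $\tilde X_1$ is the chosen lift of $X_1$; the relevant fact is that $p|_{\tilde X_1}\colon \tilde X_1\to X_1$ is a homeomorphism, which holds precisely because the restriction of the local system $\Phi$ to $\pi_1(X_1)$ is trivial, so that $X_1$ lifts as a single sheet. This triviality is implicit in the construction of the classes (the paper even comments on which geometric supports have trivial or nontrivial monodromy when justifying that the classes are well defined), and your Step~2 argument silently uses it again when lifting $\delta_{X_1}$ ``inside $\tilde X_1$.'' Making this hypothesis explicit would tighten the argument without changing its structure.
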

\begin{rmk}\label{orientd}
For actual computations, in the case where the homology classes come from product of one dimensional submanifolds quotiented in the configuration space, one can replace the variable $d'$ by $d$ and the local system $\Phi$ by $\bar{\Phi}$ in the previous formula, and then count just the product of local orientations in the disk around each component of the  intersection point $x$ (instead of keeping track of the sign of orientations in the configuration space $\alpha_x$). 
\end{rmk}
\subsection{Specialisations given by colorings}
\begin{defn}(Change of coefficients)
For the next part, we suppose that we have a coloring $C$ of the $n$ punctures of the disk into $l$ colours:
\begin{equation}
C:\{1,...,n\}\rightarrow \{1,...,l\}.
\end{equation} 
We will work in the situation where ${\bar{l}}=0$ or ${\bar{l}}=l$. Then, we fix ${\bar{l}}$ components $\bar{p}_1,...,\bar{p}_{\bar{l}}\in \{1,...,n\}$.

Then, we define the corresponding change of variables, where we change the first $n+{\bar{l}}$ components $x_1,...,x_n,y_1,...,y_{\bar{l}}$ from the ring $\Z[x_1^{\pm 1},...,x_n^{\pm 1},y_1^{\pm 1}..., y_{\bar{l}}^{\pm 1}, d^{\pm 1}]$ to $l+{\bar{l}}$ variables, denoted by $x_1,..,x_l,y_1,...,y_{\bar{l}}$,  as below:
$$ f_C: \Z[x_1^{\pm 1},...,x_n^{\pm 1},y_1^{\pm 1}..., y_{\bar{l}}^{\pm 1}, d^{\pm 1}] \rightarrow \Z[x_1^{\pm 1},...,x_l^{\pm 1},y_1^{\pm 1}..., y_{\bar{l}}^{\pm 1}, d^{\pm 1}]$$
\begin{equation}\label{eq:8} 
\begin{cases}
&f_C(x_i)=x_{C(i)}, \ i\in \{1,...,n\}\\
&f_C(y_j)=y_{C(\bar{p}_j)}, \ j\in \{1,...,{\bar{l}}\}.
\end{cases}
\end{equation}
\end{defn}
Now, we will change the coefficients of the homology groups using the function $f_C$.
\begin{defn}(Homology groups)\label{D:4} Let us define the homologies which correspond to these coefficients, given by:
\begin{enumerate}
 \item[$\bullet$]  $H^{-k}_{n,m,{\bar{l}}}:=\mathscr H^{-k}_{n,m,{\bar{l}}}|_{f_C}$ 
 \item[$\bullet$]  $H^{-k,\partial}_{n,m,{\bar{l}}}:=\mathscr H^{-k,\partial}_{n,m,{\bar{l}}}|_{f_C}.$
\end{enumerate}
They are modules over $\Z[x_1^{\pm 1},...,x_l^{\pm 1},y_1^{\pm 1}..., y_{\bar{l}}^{\pm 1}, d^{\pm 1}]$.
\end{defn}
Now, we look at braids with $n+3{\bar{l}}$ strands which preserve the colouring $C$ and the induced colouring on the components $\bar{p}_1,...,\bar{p}_l$ and denote the set of such braids by $B^{C}_{n+3{\bar{l}}}$.
\begin{prop}(\cite{CrM}) \label{colbr} There is a braid group action (which comes from the mapping class group action) which is compatible with the action of deck transformations at the homological level:
$$B^{C}_{n+3{\bar{l}}} \curvearrowright H^{-k}_{n,m,{\bar{l}}} \ \left( \text{ as a module over } \Z[x_1^{\pm 1},...,x_l^{\pm 1},y_1^{\pm 1}..., y_{\bar{l}}^{\pm 1}, d^{\pm 1}]\right).$$ 
\end{prop}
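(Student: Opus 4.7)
The plan is to realize each braid as a self-homeomorphism of the punctured disk via the Birman identification, lift it to the covering space $\tilde{C}^{-k}_{n+3l,m}$, and then check that the induced action on the homology submodules is $\Z[x_1^{\pm 1},\dots,x_l^{\pm 1},y_1^{\pm 1},\dots,y_l^{\pm 1},d^{\pm 1}]$-linear after specializing by $f_C$. Concretely, every $\beta\in B_{n+3l}^C$ is represented by a diffeomorphism $\varphi_\beta$ of $\mathscr D_{n+3l}$ fixing $\partial\mathscr D_{n+3l}$ pointwise, and this descends to a homeomorphism $\Phi_\beta$ of $C_{n+3l,m}$ which fixes the base point $\mathbf{d}\in(\partial\mathscr D_{n+3l})^{\times m}$.

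Next I would lift $\Phi_\beta$ to a map $\tilde{\Phi}_\beta$ on the covering. Such a lift exists and is unique once one imposes $\tilde{\Phi}_\beta(\tilde{\mathbf{d}})=\tilde{\mathbf{d}}$, provided that $\Phi_\beta$ preserves the local system, i.e.\ $\Phi\circ(\Phi_\beta)_* = \Phi$ on $\pi_1(C_{n+3l,m},\mathbf{d})$. On the nose this fails, since $(\Phi_\beta)_*$ merely permutes the meridional generators $\sigma_i$ among themselves and the triples $(\gamma_j,\bar{\gamma}_j,\eta_j)$ among themselves (with $\delta$ preserved). However, after applying the change of coefficients $f_C$ the variables $x_i$ collapse to $x_{C(i)}$ and the $y_j$ collapse to $y_{C(\bar{p}_j)}$, so that all punctures, respectively triples, within a common colour class contribute the same weight; then the identity $f_C\circ\Phi\circ(\Phi_\beta)_* = f_C\circ\Phi$ holds precisely because $\beta\in B_{n+3l}^C$ preserves the colouring on the $n$ punctures and on the distinguished strands $\bar{p}_1,\dots,\bar{p}_l$. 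This invariance provides a well-defined lift $\tilde{\Phi}_\beta$ on the covering attached to the specialized local system, which commutes with the deck group by construction and therefore induces a $\Z[x_1^{\pm 1},\dots,x_l^{\pm 1},y_1^{\pm 1},\dots,y_l^{\pm 1},d^{\pm 1}]$-linear endomorphism on homology.

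Since $\varphi_\beta$ is the identity near $\partial\mathscr D_{n+3l}$, the homeomorphism $\Phi_\beta$ preserves the boundary stratification of $C_{n+3l,m}$ used (cf.\ \cite{CrM}) to define the submodules $\mathscr H^{-k}_{n,m,l}$ and $\mathscr H^{-k,\partial}_{n,m,l}$, so the action restricts to the subspaces of interest and descends to $H^{-k}_{n,m,l}$ after specialization. Standard arguments then give $\tilde{\Phi}_\beta\circ\tilde{\Phi}_{\beta'}=\tilde{\Phi}_{\beta\beta'}$ on homology, since isotopies of $\varphi_\beta$ rel boundary lift to homotopies of $\tilde{\Phi}_\beta$ preserving $\tilde{\mathbf{d}}$, yielding a genuine group action of $B_{n+3l}^C$.

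The main obstacle is the bookkeeping in the second paragraph: one has to verify that $\beta\in B_{n+3l}^C$ not only preserves the colouring but also the partition $\{1,\dots,n-k\}\sqcup\{n-k+1,\dots,n\}$ implicit in the sign conventions of $\epsilon$ (with weight $-2x_i$ on the last $k$ punctures), so that the Hurwitz-conjugated generators carry the correct signs. This compatibility must be built into the definition of the colouring-preserving subgroup and checked carefully against the formula \eqref{eq:8}; once this bookkeeping is in place, the rest of the proof proceeds mechanically from the covering-space lifting criterion.
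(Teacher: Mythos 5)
The paper does not prove this statement; it is imported by citation from \cite{Cr5}, so there is no in-text argument to compare against. Your reconstruction follows the standard Lawrence--Bigelow route (Birman identification, covering-space lifting criterion, specialization of coefficients), which is the expected line of argument, and you correctly identify the delicate point.

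Two remarks on the argument as written. First, after observing that $\beta$ does not lift to the covering of the unspecialized local system $\Phi$, you pivot to ``the covering attached to the specialized local system''. This is the right idea, but note that $H^{-k}_{n,m,l}$ is \emph{defined} as $\mathscr H^{-k}_{n,m,l}\big|_{f_C}$, i.e.\ as a base change of the homology of the large covering, not a priori as the homology of the smaller covering attached to $f_C\circ\Phi$. Identifying the two requires a flatness/freeness or universal-coefficient argument (which does hold in this context because these Lawrence-type representations are free modules over the Laurent ring, cf.\ \cite{CrM}), but you should say a word about it rather than silently conflating the two descriptions. Second, your flagged obstacle about the partition $\{1,\ldots,n-k\}\sqcup\{n-k+1,\ldots,n\}$ is genuine: a braid could preserve the colouring $C$ yet exchange a puncture weighted $+2x_i$ with one weighted $-2x_j$ of the same colour, in which case $f_C\circ\Phi\circ(\Phi_\beta)_*\neq f_C\circ\Phi$ and the lift fails. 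In the paper's applications the braids are always of the form $\beta_n\cup\mathbb I_{n+1}$ or $\beta_n\cup\mathbb I_{n+3l+1}$, which permute only punctures $1,\ldots,n$; with the parameter choice $k\to n$ on $2n+1$ (resp.\ $2n+3l+1$) punctures, these all lie inside the positive block $\{1,\ldots,n-k\}$, so the sign issue never arises. You are right that this must either be built into the definition of $B^C_{n+3l}$ or come for free from the application; as the statement is phrased here it is left implicit.
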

\begin{prop}(\cite{CrM})\label{P:3'}
There is also a topological intersection pairing:
$$\left\langle ~,~ \right\rangle:  H^{-k}_{n,m,{\bar{l}}} \otimes H^{-k,\partial}_{n,m,{\bar{l}}}\rightarrow\Z[x_1^{\pm 1},...,x_l^{\pm 1},y_1^{\pm 1}..., y_{\bar{l}}^{\pm 1}, d^{\pm 1}].$$
whose method of computation is the same as the one presented in Proposition \ref{P:3}, specialised using the change of coefficients $f_C$:
$$ \left\langle ~,~ \right\rangle= \  << ~,~>>|_{f_C}.$$
\end{prop}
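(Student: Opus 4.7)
The plan is to obtain $\left\langle~,~\right\rangle$ by base change from the intersection pairing $<<~,~>>$ of Proposition \ref{P:3'''}, using that $f_C$ is a ring homomorphism. Write $R := \Z[x_1^{\pm 1},...,x_n^{\pm 1},y_1^{\pm 1},..., y_l^{\pm 1}, d^{\pm 1}]$ and $R' := \Z[x_1^{\pm 1},...,x_l^{\pm 1},y_1^{\pm 1},..., y_l^{\pm 1}, d^{\pm 1}]$, so that $f_C\colon R\to R'$ makes $R'$ into an $R$-algebra. Definition \ref{D:4} says precisely that the specialised modules are obtained by extension of scalars along $f_C$: $H^{-k}_{n,m,l} = \mathscr H^{-k}_{n,m,l}\otimes_R R'$ and $H^{-k,\partial}_{n,m,l} = \mathscr H^{-k,\partial}_{n,m,l}\otimes_R R'$.

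First I would post-compose the pairing of Proposition \ref{P:3'''} with $f_C$, obtaining an $R$-bilinear map $f_C\circ<<~,~>>\colon \mathscr H^{-k}_{n,m,l}\otimes_R\mathscr H^{-k,\partial}_{n,m,l}\to R'$. Using the canonical isomorphism $(\mathscr H^{-k}_{n,m,l}\otimes_R R')\otimes_{R'}(\mathscr H^{-k,\partial}_{n,m,l}\otimes_R R')\cong(\mathscr H^{-k}_{n,m,l}\otimes_R\mathscr H^{-k,\partial}_{n,m,l})\otimes_R R'$ together with the universal property of tensor product, this extends uniquely to an $R'$-bilinear pairing
$$\left\langle~,~\right\rangle\colon H^{-k}_{n,m,l}\otimes_{R'} H^{-k,\partial}_{n,m,l}\to R', \qquad \left\langle h_1\otimes r_1,\, h_2\otimes r_2\right\rangle = r_1 r_2\cdot f_C\bigl(<<h_1,h_2>>\bigr),$$
which by construction is the specialisation $<<~,~>>|_{f_C}$.

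To recover the computational formula, I would unpack the right-hand side in the geometric setting of Proposition \ref{P:3}: if $H_1, H_2$ are represented by lifts $\tilde X_1, \tilde X_2$ of transversally intersecting immersed submanifolds $X_1,X_2\subseteq C_{n+3l,m}$, then $<<H_1,H_2>> = \sum_{x\in X_1\cap X_2}\alpha_x\,\Phi(l_x)\in R$. Applying $f_C$ termwise yields
$$\left\langle H_1, H_2\right\rangle = \sum_{x\in X_1\cap X_2}\alpha_x\cdot f_C\bigl(\Phi(l_x)\bigr)\in R',$$
which is the asserted specialised geometric formula. I do not expect a conceptual obstacle here: the statement is essentially functoriality of the intersection form under a ring homomorphism of coefficient rings. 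The only point that deserves a moment of care is that the distinguished subspaces $\mathscr H^{-k}_{n,m,l}$ and $\mathscr H^{-k,\partial}_{n,m,l}$, defined as images of twisted-coefficient homology into the homology of the covering, remain well-behaved after the base change along $f_C$ — this follows from the functoriality of twisted homology with respect to the coefficient change.
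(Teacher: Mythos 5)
Your proposal is correct and follows the same route the paper implicitly takes: the paper states Proposition~\ref{P:3'} without a detailed proof, simply asserting that the pairing is obtained from Proposition~\ref{P:3'''} by specialising along $f_C$, and your argument is the natural unpacking of that assertion as extension of scalars along the ring map $f_C$ together with functoriality of the geometric formula from Proposition~\ref{P:3}.
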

\begin{defn}(Specialisation of coefficients)
Let $N_1,...,N_l\in \N$ a sequence of natural numbers. We define the specialisation of coefficients given by:
$$ \psi^{C}_{q,N_1,...,N_l}: \Z[x_1^{\pm 1},...,x_l^{\pm 1},y_1^{\pm 1}..., y_l^{\pm 1}, d^{\pm 1}] \rightarrow \Z[q^{\pm 1}]$$
\begin{equation}\label{eq:8'} 
\begin{cases}
&\psi^C_{q,N_1,...,N_l}(x_i)=q^{N_i-1}, \ i\in \{1,...,l\}\\
&\psi^C_{q,N_1,...,N_l}(y_i)=q^{N_i}, \ i\in \{1,...,{\bar{l}}\}\\
&\psi^C_{q,N_1,...,N_l}(d)=q^{-2}.
\end{cases}
\end{equation}
\end{defn}
\section{Coloured Jones polynomials for framed links}\label{S:4}
In this section, we show a topological intersection formula for coloured Jones polynomials for links whose components are coloured with different colours. 

Let us start with $L=K_1 \cup ...\cup K_l$ a framed oriented link with framings $f_1,...,f_l\in \Z$. 
 Let us choose $\beta_n \in B_n$ a braid such that  $L= \widehat{\beta_n}$. We also fix a set of colours $N_1,...,N_l\in \N$.

\begin{notation}
For a natural number $M\in \N$, we denote by $V_{M}$ the $M$-dimensional representation of the quantum group $U_q(sl(2))$.

We colour the components of the link $L$ with the representations $V_{N_1},...,V_{N_l}$ and denote the coloured Jones polynomial of this framed link by $J_{N_1,...,N_l}(L,q)$ (as in \cite{Turaev}).
Also, for the further notations, we consider: $$\bar{N}:=(N_1,...,N_l).$$
\end{notation}
\begin{defn}(Induced colorings)\\
a) (Colourings of the braid)   The colouring of the link given by $\bar{N}$ induces a colouring of the strands of the braid, and we denote the corresponding colours by:
$$(C_1,...,C_n).$$ 
Now, we look at the link as the closure of the braid $\beta_n$ together with $n$ straight strands, and so, we have an associated colouring of $2n$ points $C:\{1,...,2n\}\rightarrow \{1,...,l\}$. 
This means that we have the following colours on the $2n$ points:
$$(C_1,...,C_n,C_n,...,C_1).$$ 
We will work with the $(2n+1)$-punctured disk and for this purpose we define a colouring of $2n+1$ points as below:
$$\bar{C}^{\bar{N}}:=(C_1,...,C_n,N_1,C_n,...,C_1).$$
 b) (Set of states) We consider the following indexing set:
 
 $$C(\bar{N}):= \big\{ \bar{i}=(i_1,...,i_n)\in \N^{n} \mid 0\leq i_k \leq C_k-1, \  \forall k\in \{1,...,n\} \big\}.$$ 
\end{defn}
\subsection{Homology classes}
Now that we have the induced colouring of the braid and the corresponding indexing set $C(\bar{N})$, we can present the homology groups that we will use. More specifically, we will use the configuration space of $1+\sum_{i=1}^{n} (C_i-1)$ points on the $(2n+1)$-punctured disk. Then, we consider the  covering coming from the local system $\Phi$ associated to the parameters:
$$ n \rightarrow 2n+1; \ \ \ m\rightarrow 1+\sum_{i=1}^{n} (C_i-1); \ \ \ {\bar{l}}\rightarrow 0; \ \ \ k\rightarrow -n.$$ 
We use the corresponding homology groups:
$$H^{-n}_{2n+1, 1+\sum_{i=1}^{n} (C_i-1),0} \ \ \ \ \ \ \ \ \ \ \ \ \ \ \ \text{ and }\ \ \ \ \ \ \ H^{-n,\partial}_{2n+1,1+\sum_{i=1}^{n} (C_i-1),0}.$$
For the following part, since the third component is zero, we will just erase it from the indices of the homology groups.
Now we are ready to define the homology classes that will be used in the intersection model. The classes will be prescribed by a couple given by:
\begin{itemize}
\item[•] A {\em geometric support}, meaning a {\em set of arcs in the punctured disk}. The image of the product of these arcs in the configuration space, gives us a submanifold which has half of the dimension of the configuration space. 
\item[•] A set of {\em paths to the base point}, which start in the base points from the punctured disk and end on these curves. The set of these paths gives a path in the configuration space, from $\bf d$ to the submanifold mentioned above. 
\end{itemize}
Then, we lift the path to a path in the covering space, starting from $\tilde{\bf{d}}$ and then we lift the submanifold through the end point of this path. The detailed construction of such homology classes is presented in \cite{Cr5}, Section 5. 
\begin{defn} (Homology classes)\\
 For any set of indices $\bar{i}=(i_1,...,i_{n}) \in C(\bar{N})$  we define two homology classes, given by the geometric supports from figure \ref{Picture0}:
 
$${\color{red} \mathscr F_{\bar{i}}^{\CC} \in H^{-n}_{2n+1, {\scriptscriptstyle 1+\sum_{i=1}^{n}}(C_i-1)}} \ \ \ \ \ \ \ \ \ \ \ \ \ \ \ \text{ and }\ \ \ \ \ \ \ \ \ \ \ \ \ \ \ \ \ \  {\color{dgreen} \mathscr L^{\CC}_{\bar{i}}\in H^{-n,\partial}_{2n+1,1+\sum_{i=1}^{n} (C_i-1)}}.$$
$$\hspace{5mm}\downarrow \text{ lifts }$$
\vspace{-13mm}
\begin{figure}[H]
\centering
\includegraphics[scale=0.4]{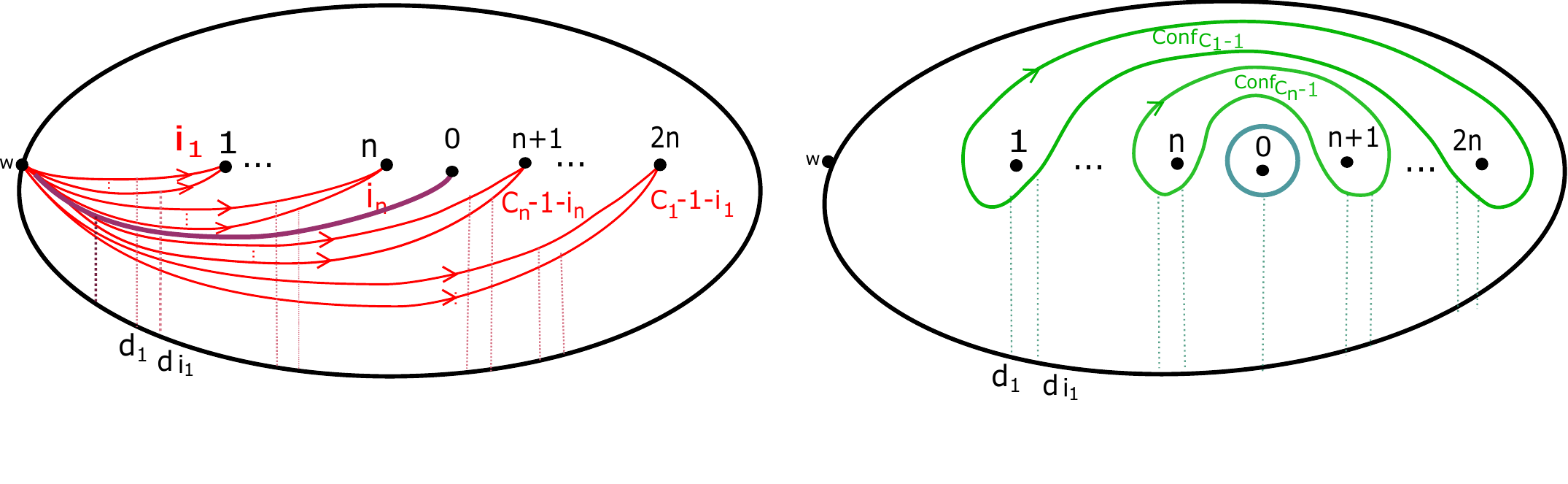}
\vspace{-5mm}
\caption{Embedded Lagrangians}
\label{Picture0}
\end{figure}
\end{defn}
In the next part, we use the specialisation of coefficients:
$$ \psi^{C}_{q,N_1,...,N_l}: \Z[x_1^{\pm 1},...,x_{l}^{\pm 1}, d^{\pm 1}] \rightarrow \Z[q^{\pm 1}]$$
\begin{equation}\label{eq:8''''} 
\begin{cases}
&\psi^{C}_{q,N_1,...,N_l}(x_i)=q^{N_i-1}, \ i\in \{1,...,l\}\\
&\psi^{C}_{q,N_1,...,N_l}(d)=q^{-2}.
\end{cases}
\end{equation}
\subsection{Intersection model}
Now we show that the coloured Jones polynomial of a link coloured with the colours $N_1,...,N_l$ can be obtained from an intersection pairing which uses the classes $\mathscr F_{\bar{i}}^{\CC}$ and $\mathscr L_{\bar{i}}^{\CC}$ for all $\bar{i} \in C(\bar{N})$.
\begin{thm}\label{THEOREM}(Topological state sum model for coloured Jones polynomials for coloured links)

\begin{equation}
\begin{aligned}
J_{N_1,...,N_l}(L,q)& =~ q^{ \sum_{i=1}^{l}\left( f_i- \sum_{j \neq i} lk_{i,j}\right)(N_i-1)} \cdot \\
 & \cdot \left(\sum_{\bar{i}\in C(\bar{N})} \left( \prod_{i=1}^{n}x^{-1}_{C(i)} \right)\cdot  \left\langle(\beta_{n} \cup {\mathbb I}_{n+1} ) \ { \color{red} \mathscr F_{\bar{i}}^{\CC}}, {\color{dgreen} \mathscr L_{\bar{i}}^{\CC}}\right\rangle \right)\Bigm| _{\psi^{C}_{q,N_1,...,N_l}}.
\end{aligned}
\end{equation} 
In this formula we denote by $(lk_{i,j})_{i,j \in \{1,..,l\}}$ the linking matrix of the link $L$.
\end{thm}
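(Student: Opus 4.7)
The plan is to extend the topological intersection model from the author's earlier papers \cite{Cr3}, \cite{Cr5}, where the analogous statement was proved for links all of whose components carry a single colour $N$. The overall strategy is in three steps: first pass from $J_{N_1,\ldots,N_l}(L,q)$ to its standard Reshetikhin-Turaev state sum arising from the $R$-matrix of $U_q(sl(2))$; then identify each state-sum coefficient with a specialised graded intersection number provided by Proposition \ref{P:3}; finally match the global normalisation including the writhe correction.

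First, I would write $L = \widehat{\beta_n}$ and recall the standard formula
\begin{equation*}
J_{N_1,\ldots,N_l}(L,q) \;=\; q^{\sum_{i=1}^{l}(f_i - \sum_{j \neq i} lk_{i,j})(N_i-1)} \cdot \operatorname{tr}_q\!\bigl( \rho_{\bar{C}}(\beta_n) \bigr),
\end{equation*}
where $\rho_{\bar{C}}$ is the braid group action on $V_{C_1}\otimes \cdots \otimes V_{C_n}$ induced by the $R$-matrix, $\operatorname{tr}_q$ is the quantum trace, and the prefactor is the standard writhe correction passing from blackboard framings to Reshetikhin-Turaev framings. Expanding $\operatorname{tr}_q(\rho_{\bar{C}}(\beta_n))$ in the weight basis of each $V_{C_i}$ produces a sum indexed exactly by the set $C(\bar{N})$ of admissible multi-indices.

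Second, I would compute the pairing
\begin{equation*}
\left\langle (\beta_n \cup \mathbb{I}_{n+1})\,{\color{red}\mathscr F_{\bar{i}}^{\CC}},\;{\color{green}\mathscr L_{\bar{i}}^{\CC}} \right\rangle
\end{equation*}
via the recipe of Proposition \ref{P:3} (then specialised through $f_C$ as in Proposition \ref{P:3'}): a signed count of transverse intersection points of the geometric supports in the base configuration space, each weighted by a monomial in $x_1,\ldots,x_l,d$ dictated by the local system $\Phi$. Applying $\psi^{C}_{q,N_1,\ldots,N_l}$ then turns each such monomial into a power of $q$. I would verify that these weighted intersection points reproduce, term by term, the matrix coefficients of $\rho_{\bar{C}}(\beta_n)$ in the weight basis labelled by $\bar{i} \in C(\bar{N})$. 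The normalising factor $\prod_{i=1}^n x^{-1}_{C(i)}$ absorbs the half-integer shifts in the $R$-matrix normalisation, and the role of the middle $(n+1)$-st puncture (coloured by $N_1$) together with the second half $\mathbb{I}_{n+1}$ of the braid is to realise the quantum Markov trace, precisely as in the monochromatic setup of \cite{Cr5}.

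The main obstacle will be the bookkeeping required to show that the identification of intersection points with weight-basis matrix coefficients still works when strands carry different colours. In the monochromatic case the local picture near every puncture is identical, whereas here $\psi^{C}_{q,N_1,\ldots,N_l}$ assigns different specialisations $x_{C(i)}\mapsto q^{N_{C(i)}-1}$ at different punctures, and the allowed number of configuration points near the $i$-th puncture varies with $C_i$. One must check that the geometric supports of $\mathscr F_{\bar i}^{\CC}$ and $\mathscr L_{\bar i}^{\CC}$ are built so that the local multiplicity $i_k$ at the $k$-th puncture produces precisely the weight $q^{N_{C(k)}-1-2i_k}$ needed for the coloured $R$-matrix coefficient. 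Once this coloured local matching is verified for each generator $\sigma_k^{\pm 1}\in B_n$, the multiplicativity of both the $R$-matrix representation and the braid action on the Lawrence-type modules $H^{-n}_{2n+1, 1+\sum(C_i-1)}$ propagates the equality across the whole braid word, and summing over $\bar i\in C(\bar N)$ together with attaching the framing prefactor yields the stated formula.
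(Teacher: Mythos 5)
Your proposal follows essentially the same approach as the paper: reduce $J_{N_1,\ldots,N_l}(L,q)$ to its Reshetikhin--Turaev state sum, invoke Martel's identification to match the braid actions on the quantum and homological sides, and read the sum over $\bar i \in C(\bar N)$ and the dual pairing with $\mathscr L_{\bar i}^{\CC}$ as the cups, caps, and quantum trace. One correction to your accounting of factors: the prefactor $\prod_{i=1}^n x^{-1}_{C(i)}$ is not an $R$-matrix normalisation shift; the paper first peels off the middle $(n+1)$-st puncture to show
\begin{equation*}
\left\langle(\beta_{n} \cup {\mathbb I}_{n+1} ) \ {  \mathscr F_{\bar{i}}^{\CC}}, { \mathscr L_{\bar{i}}^{\CC}}\right\rangle= d^{- \sum_{k=1}^{n}i_k}  \left\langle(\beta_{n} \cup {\mathbb I}_{n} ) \ { \bar{ \mathscr F}_{\bar{i}}^{\CC}}, {\bar{\mathscr L}_{\bar{i}}^{\CC}}\right\rangle,
\end{equation*}
and then $\psi^{C}_{q,N_1,\ldots,N_l}\bigl(\prod_{i=1}^n x^{-1}_{C(i)}\cdot d^{-\sum_k i_k}\bigr) = q^{-\sum_k ((C_k-1)-2i_k)}$ is identified as the action of the pivotal element $K^{-1}$ inserted by the quantum (Markov) trace when closing the caps. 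Apart from this attribution, your outline (local verification per generator $\sigma_k^{\pm1}$, then multiplicativity across the braid word, then the framing prefactor) matches how the paper extends the monochromatic argument of \cite{Cr3} and \cite{Cr5} to the multi-colour setting.
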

\begin{proof}
The proof of this intersection formula is a generalisation of the strategy used in the model for coloured Jones polynomials coloured with the same colour, presented in \cite{Cr5}, based on arguments from \cite{Cr3}. We outline the main steps as follows.\\
{\bf Step 1} We consider the homology classes
$$ \bar{\mathscr F}_{\bar{i}}^{\CC} \in H^{-n}_{2n, {\scriptscriptstyle \sum_{i=1}^{n}}(C_i-1)} \ \ \ \ \ \ \text{ and }\ \ \ \ \ \ \ \ \  { \bar{\mathscr L}^{\CC}_{\bar{i}}\in H^{-n,\partial}_{2n,\sum_{i=1}^{n} (C_i-1)}}$$
which have the same geometric support as the classes $\mathscr F_{\bar{i}}^{\CC}$ and $\mathscr L_{\bar{i}}^{\CC}$ except that we remove the 1-dimensional part which is supported around the puncture labeled by $0$, namely the purple segment and the blue circle (see a similar argument in Step 2, Section 6 from \cite{Cr5}). Then we have that:
$$ \left\langle(\beta_{n} \cup {\mathbb I}_{n+1} ) \ {  \mathscr F_{\bar{i}}^{\CC}}, { \mathscr L_{\bar{i}}^{\CC}}\right\rangle= d^{- \sum_{k=1}^{n}i_k}  \left\langle(\beta_{n} \cup {\mathbb I}_{n} ) \ { \bar{ \mathscr F}_{\bar{i}}^{\CC}}, {\bar{\mathscr L}_{\bar{i}}^{\CC}}\right\rangle $$
This means that we want to prove the following: 
\begin{equation}
\begin{aligned}
J_{N_1,...,N_l}(L,q)& =~ q^{ \sum_{i=1}^{l}\left( f_i- \sum_{j \neq i} lk_{i,j}\right)(N_i-1)} \cdot \\
 & \cdot \left(\sum_{\bar{i}\in C(\bar{N})} \prod_{i=1}^{n}x^{-1}_{C(i)} \cdot d^{- \sum_{k=1}^{n}i_k} \left\langle(\beta_{n} \cup {\mathbb I}_{n} ) \ { \color{red} \bar{\mathscr F}_{\bar{i}}^{\CC}}, {\color{dgreen} \bar{\mathscr L}_{\bar{i}}^{\CC}}\right\rangle \right)\Bigm| _{\psi^{C}_{q,N_1,...,N_l}}.
\end{aligned}
\end{equation} 
{\bf Step 2} For the next part, we follow step by step the correspondence to the Reshetikhin-Turaev definition of the coloured Jones polynomials. More specifically, the cups of the diagram correspond to the sum of the classes $\bar{\mathscr F}_{\bar{i}}^{\CC}$ over all $\bar{i}\in C(\bar{N})$. 
Further on, the braid action on the quantum side and on the homological side correspond, using the identification due to Martel \cite{Martel}. 

{\bf Step 3}
In the end, the caps of the diagram require that after the braid group action, we evaluate just the components which are symmetric with respect to the middle of the disc. More precisely this means that the indices corresponding to the points $k$ and $2n+1-k$ should sum up to the colour $C_k-1$. This is encoded geometrically by the intersection with the dual class $\bar{\mathscr L}_{\bar{i}}^{\CC}$. 

On the algebraic side, one should also encode an extra coefficient which corresponds to the caps of the diagram. We reffer to the details of the argument for a single colour as they are presented in \cite{Cr3} (Section 5 and Section 7), except that here we have a different local system. The fact that the change of the local system does not affect the flow of the proof follows by a similar computation as the one from Step 3, Section 6 from \cite{Cr5}. The main points are as follows. 

This coefficient is given by the pivotal structure, more specifically by the action of the element $K^{-1}$ from the quantum group. Now, for a set of indices $i_1,...,i_n$ the $K^{-1}$ action on the corresponding tensor monomial is given by:
\begin{equation}
 q^{- \sum_{k=1}^{n}\left( (C_k-1)-2i_k\right)}= \left( \prod_{k=1}^{n}q^{- (C_k-1)} \right) \cdot  q^{ \ \sum_{k=1}^{n}2i_k}.
\end{equation}
This coefficient is precisely the specialisation:
\begin{equation}
\psi^{C}_{N_1,...,N_l} \left(\left(\prod_{i=1}^{n}x^{-1}_{C(i)} \right) \cdot d^{- \sum_{k=1}^{n}i_k}\right).
\end{equation}
The remaining coefficient which appears in the formula comes from the framing contribution of the components of the link $L$.
\end{proof}

\section{WRT from intersections in configuration spaces}\label{S:5}
In this part we pass towards invariants for $3$-manifolds and aim to construct the intersection model for the Witten-Reshetikhin-Turaev invariants, as presented in Theorem \ref{THEOREM}.
Let us fix a level $\cN \in \N$. As in the previous section, we start with a framed oriented link with $l$ components, which is the closure of a braid with $n$ strands.
\begin{defn}(Choice of $l$ points)\\
Let us choose $l$ strands of the braid $\beta_n$ which all belong to different components of the link and denote their indices by:
$p_1,...,p_l$. Also, we look in the $2n+1$ punctured disk and denote the symmetric of these points with respect to the middle axis by $\bar{p}_1,...,\bar{p}_l$.
\end{defn}

This time we will use the homology of the covering of the configuration space of $n(\cN-2)+l+1$ particles in the punctured disk with $2n+3l+1$ punctures, associated to the parameters:
$$ n \rightarrow 2n+1; \ \ \ m\rightarrow n(\cN-2)+l+1; \ \ \ l={\bar{l}}; \ \ \ k\rightarrow -n.$$ 
More precisely, we will work with the homology groups:
$$H^{-n}_{2n+1, n(\cN-2)+l+1,l} \ \ \ \ \ \ \text{ and }\ \ \ \ \ \ \ H^{-n,\partial}_{2n+1,n(\cN-2)+l+1,l}.$$
On the picture, we consider $3l$ blue punctures in the punctured disk such that they are split into triples which lie below the privileged punctures $\bar{p}_1,...,\bar{p}_l$, as in figure \ref{Picture}.
Now we are ready to define the main tools in our construction, which are the homology classes in the homology presented above.

\subsection{Homology classes}
\begin{defn} (First Homology class)\label{D:C1}\\
 For a set of indices $i_1,...,i_{n} \in \{0,...,\cN-2\}$ we denote $\bar{i}:=(i_1,...,i_n)$ and we consider the class given by the geometric support from picture \ref{Picture}:
 
$${\color{red} \mathscr F_{\bar{i}}^{\cN} \in H^{-n}_{2n+1,n(\cN-2)+l+1,l}}$$
\vspace{-10mm}
\begin{figure}[H]
\centering
\includegraphics[scale=0.4]{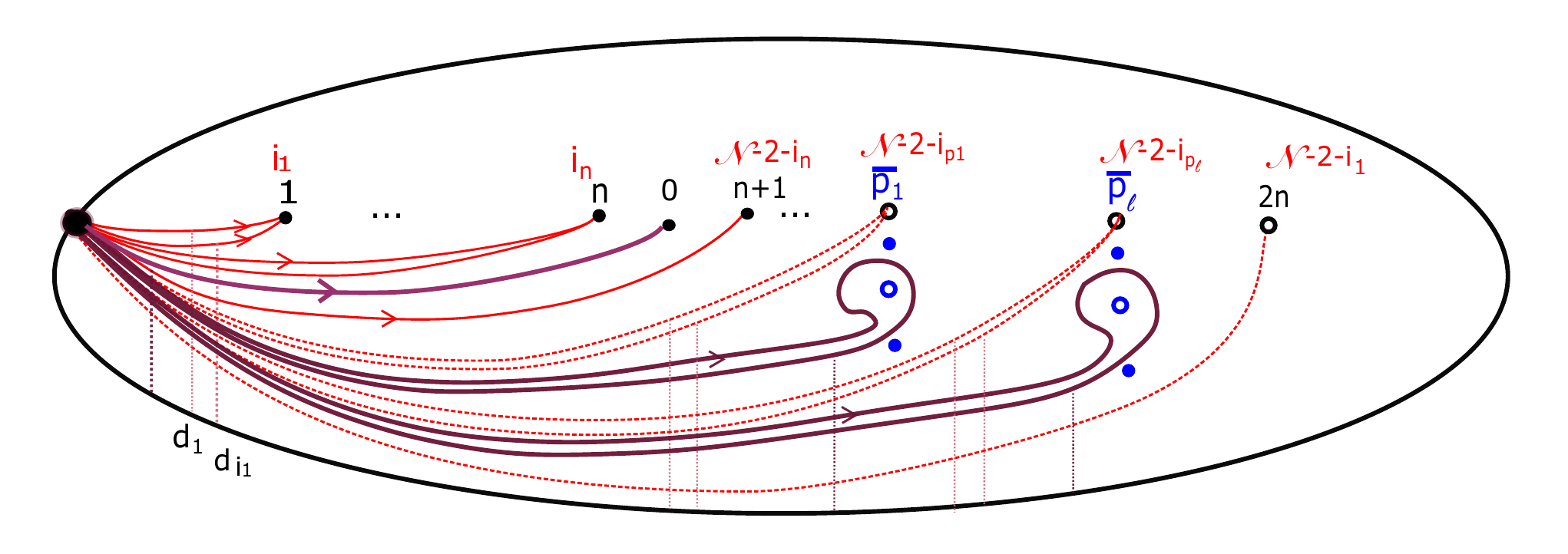}
\caption{}
\label{Picture}
\end{figure}
\end{defn}

\begin{rmk}
When we take one of the circles from the above picture, its lift has a non-trivial monodromy, so this corresponds to an arc which starts and ends in the fiber over $w$. This shows that the lift of the geometric support from figure \ref{Picture} will lead to a well defined homology class in the homology relative to the fiber $P^{-1}$.
\end{rmk}
\begin{defn} (Second Homology class)\label{D:C2}\\
 Also for each choice of indices $i_1,...,i_{n} \in \{0,...,\cN-2\}$ we consider the geometric support given by the product of configuration spaces on the circles from figure \ref{Picture2} and define the associated homology class as below:
 $${\color{dgreen} \mathscr L_{\bar{i}}^{\cN} \in H^{-n,\partial}_{2n+1,n(\cN-2)+l+1,l}}$$
\vspace{-10mm}
\begin{figure}[H]
\centering
\includegraphics[scale=0.4]{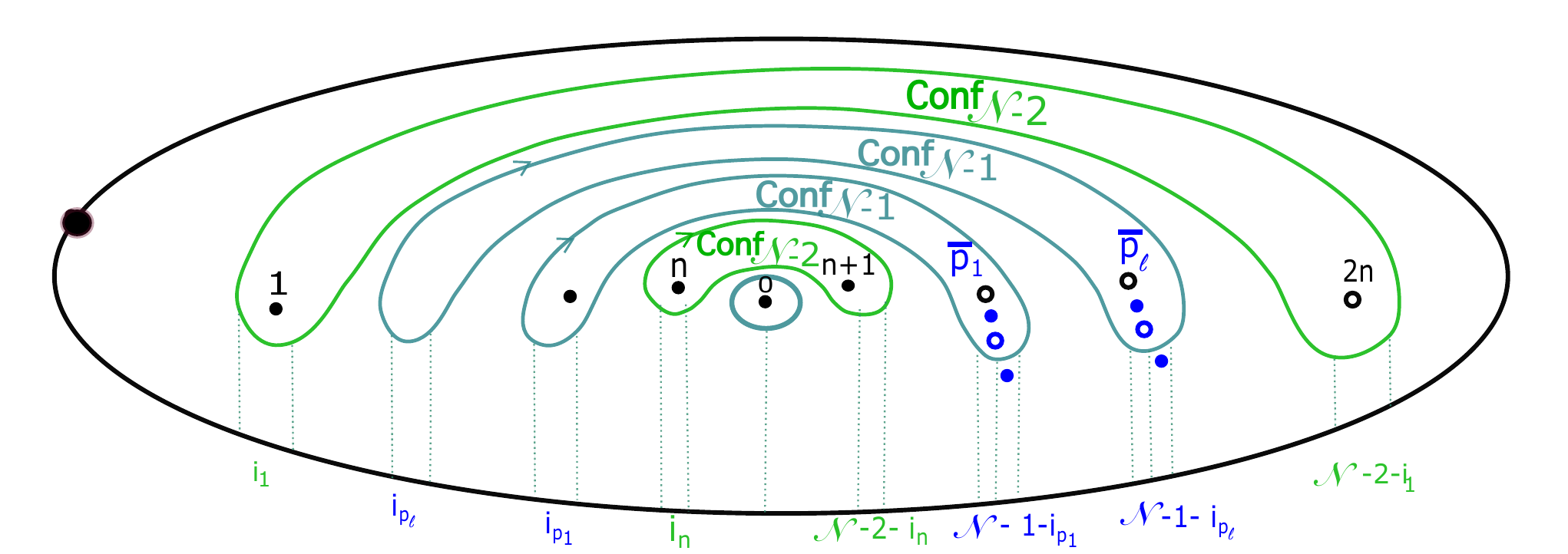}
\caption{}
\label{Picture2}
\end{figure}
\end{defn}
\begin{rmk}
All circles from the above picture have trivial monodromy, since the local system evaluates symmetric points with opposite monodromies and also it evaluates in opposite directions the loops around the blue punctures which are displayed on the vertical directions (and lie in the disks bounded by those circles). So, the geometric support from figure \ref{Picture2} leads to a well defined homology class in $H^{-n,\partial}_{2n+1,n(\cN-2)+l+1,l}$. 
\end{rmk}
We remind the definition of the specialisation of coefficients which is  associated to this context:
$$ \psi^{C}_{q,N_1,...,N_l}: \Z[x_1^{\pm 1},...,x_{l}^{\pm 1},y_1^{\pm 1},...,y_{l}^{\pm 1}, d^{\pm 1}] \rightarrow \Z[q^{\pm 1}]$$
\begin{equation}\label{not'}
\begin{cases}
&\psi^{C}_{q,N_1,...,N_l}(x_i)=q^{N_i-1}, \ i\in \{1,...,l\}\\
&\psi^{C}_{q,N_1,...,N_l}(y_i)=q^{N_i}\\
&\psi^{C}_{q,N_1,...,N_l}(d)=q^{-2}.
\end{cases}
\end{equation} 

\subsection{WRT from intersections in configuration spaces}

\

\begin{defn}(Kirby colour)\\
For $\cN\in \N$, the Kirby colour corresponding to the quantum group $U_{\xi}(sl(2))$ (\cite{Turaev}) is given by:
\begin{equation}
\Omega:=\sum_{N=1}^{\cN-1}qdim(V_N) \cdot V_N= \sum_{N=1}^{\cN-1}[N]_{\xi} \cdot V_N.
\end{equation}
\end{defn}
\begin{notation}
We denote by $b_{+},b_{-}$ and $b$ the number of positive, negative and zero eigenvalues of the linking matrix of $L$. Also, we consider:
\begin{equation}\label{coefffrac}
\begin{aligned}
&\Delta_+=J_{\Omega}(\cU_{+},\xi)\\
&\Delta_-=J_{\Omega}(\cU_{-},\xi)\\
&\cD= \ \mid \Delta_+  \mid 
\end{aligned}
\end{equation}
where $\cU_{+}$ and $\cU_{-}$ are the unknot with framing $+1$ and $-1$ respectively (\cite{Turaev}). 
\end{notation}
We will use the homological classes constructed above together with the specialisation of coefficients in order to prove the main result, which we remind below. 
\begin{thm}(Topological state sum model for the Witten-Reshetikhin-Turaev invariants)\\
Let $M$ be a closed oriented $3$-manifold and $L$ a framed oriented link with $l$ components such that $M$ is obtained by surgery along $L$. Let us choose a braid $\beta_n$ such that $L=\widehat{\beta_n}$. 
Now, for $i_1,...,i_{n} \in  \{0,...,\cN-2\}$, we consider the following Lagrangian intersection:
\begin{equation}
\begin{cases}
& \Lambda_{\bar{i}}(\beta_n) \in \Z[x_1^{\pm 1},...,x_l^{\pm 1},y_1^{\pm 1}..., y_l^{\pm 1}, d^{\pm 1}]\\
& \Lambda_{\bar{i}}(\beta_n):=\prod_{i=1}^l x_{C(p_i)}^{ \left(f_{p_i}-\sum_{j \neq {p_i}} lk_{p_i,j} \right)} \cdot \prod_{i=1}^n x_{C(i)}^{-1} 
  \   \left\langle(\beta_{n} \cup {\mathbb I}_{n+3l+1} ) \ { \color{red} \mathscr F_{\bar{i}}^{\cN}}, {\color{dgreen} \mathscr L_{\bar{i}}^{\cN}}\right\rangle. \end{cases}
\end{equation} 
Then the $\cN^{th}$ Witten-Reshetikhin-Turaev invariant has the following model:
\begin{equation}
\begin{aligned}
\tau  _{\cN}(M)=\frac{\{1\}^{-l}_{\xi}}{\cD^b \cdot \Delta_+^{b_+}\cdot \Delta_-^{b_-}}\cdot {\Huge{\sum_{i_1,..,i_n=0}^{\cN-2}}}   \left(\sum_{\substack{ \tiny 1 \leq N_1,...,N_l \leq \cN-1 \\ \bar{i}\in C(N_1,..,N_l)}}  \Lambda_{\bar{i}}(\beta_n) \Bigm| _{\psi^{C}_{\xi,N_1,...,N_l}}\right).
\end{aligned}
\end{equation} 
\end{thm}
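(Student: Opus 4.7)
The plan is to unfold the algebraic definition of the WRT invariant in terms of the Kirby colour and then reduce the statement to the topological model for coloured Jones polynomials established in Theorem \ref{THEOREM} of Section \ref{S:4}. By construction
\[ \tau_{\cN}(M) \;=\; \frac{1}{\cD^{b}\cdot \Delta_+^{b_+}\cdot \Delta_-^{b_-}}\cdot J_{\Omega,\ldots,\Omega}(L,\xi), \]
and expanding $\Omega=\sum_{N=1}^{\cN-1}[N]_\xi V_N$ on each of the $l$ components yields
\[ J_{\Omega,\ldots,\Omega}(L,\xi)\;=\;\sum_{N_1,\ldots,N_l=1}^{\cN-1}\;\prod_{i=1}^{l}[N_i]_\xi\cdot J_{N_1,\ldots,N_l}(L,\xi). \]
So it suffices to prove that, for each fixed tuple $(N_1,\ldots,N_l)$, the colour-restricted state sum $\sum_{\bar i\in C(\bar N)}\Lambda_{\bar i}(\beta_n)\bigm|_{\psi^{C}_{\xi,N_1,\ldots,N_l}}$ equals $\{1\}_\xi^{\,l}\prod_i[N_i]_\xi\cdot J_{N_1,\ldots,N_l}(L,\xi)$, so that, after multiplying by the $\{1\}_\xi^{-l}$ prefactor, the quantum integers of the Kirby colour are exactly recovered; changing the order of summation then produces the form stated in the theorem, where the outer sum runs over $i_1,\ldots,i_n\in\{0,\ldots,\cN-2\}$ and the inner sum collects those colourings for which $\bar i\in C(\bar N)$.

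For the intersection computation, I would compare the WRT geometric supports $\mathscr F_{\bar i}^{\cN}$ and $\mathscr L_{\bar i}^{\cN}$ with the coloured Jones supports $\mathscr F_{\bar i}^{\CC}$, $\mathscr L_{\bar i}^{\CC}$ of Section \ref{S:4}. The difference consists of the $3l$ extra blue punctures together with $l$ additional purple arcs inside $\mathscr F_{\bar i}^{\cN}$ and $l$ additional blue circles inside $\mathscr L_{\bar i}^{\cN}$, one attached to each privileged strand $p_i$. Proposition \ref{P:3}, applied after the specialisation $f_C$ of Proposition \ref{P:3'}, expresses the intersection as a graded sum over transverse intersection points in the base configuration space weighted by the local system $\Phi$. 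Intersection points which do not involve the new arcs or circles reproduce verbatim the coloured Jones pairing $\langle(\beta_n\cup\mathbb I_{n+1})\mathscr F_{\bar i}^{\CC},\mathscr L_{\bar i}^{\CC}\rangle$ specialised by $\psi^{C}_{\xi,N_1,\ldots,N_l}$, while the new intersection points are exactly the orange ones highlighted in Figure \ref{IntersectionForm}: they occur in pairs, one pair per component $i$, coming from the transverse intersection of a purple arc with the corresponding blue circle. The local system reads off the monodromy of the resulting loops $l_x$ as a monomial in $y_i$ and $d$ (the extra punctures carry $y_i$-weight via $\epsilon$), and the two points in each pair contribute with opposite sign and with monomials differing by a winding that, under $y_i\mapsto\xi^{N_i}$ and $d\mapsto \xi^{-2}$, specialises to $\xi^{N_i}$ and $\xi^{-N_i}$ respectively. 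Their sum therefore produces the quantum factor $\{N_i\}_\xi=\xi^{N_i}-\xi^{-N_i}$ per component, and the overall contribution factors as $\prod_{i=1}^{l}\{N_i\}_\xi\cdot\langle(\beta_n\cup\mathbb I_{n+1})\mathscr F_{\bar i}^{\CC},\mathscr L_{\bar i}^{\CC}\rangle|_{\psi}$. Since $\{N_i\}_\xi=\{1\}_\xi[N_i]_\xi$, the required Kirby-colour coefficients appear and the excess $\{1\}_\xi^{l}$ is exactly cancelled by the global prefactor $\{1\}_\xi^{-l}$.

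It remains to check the framing and writhe normalisations. The monomial $\prod_{i=1}^{l}x_{p_i}^{f_{p_i}-\sum_{j\neq p_i}lk_{p_i,j}}$ built into $\Lambda_{\bar i}(\beta_n)$ specialises under $\psi^{C}_{\xi,N_1,\ldots,N_l}$ to the framing correction $\xi^{\sum_{i=1}^{l}(f_i-\sum_{j\neq i}lk_{i,j})(N_i-1)}$ appearing in the topological model for $J_{N_1,\ldots,N_l}(L,\xi)$, and the factor $\prod_{i=1}^{n}x_{C(i)}^{-1}$ is the common cap-factor already present in the coloured Jones formula. Assembling these pieces and invoking Theorem \ref{THEOREM} of Section \ref{S:4} component-wise produces the claimed identity. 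The hard step is the second one: the geometric bookkeeping of the $l$ new pairs of intersection points, ensuring that signs, $d$-powers and $y_i$-powers combine to give exactly $\{N_i\}_\xi$ per pair and that no spurious monomial survives. I expect to carry this out by choosing the paths $\gamma_{X_1},\gamma_{X_2},\delta_{X_1},\delta_{X_2}$ of Proposition \ref{P:3} so that the loops $l_x$ associated with the two orange points of a given component differ only by a small loop around the triple of extra punctures encoded by $y_i$, at which point the local system $\Phi$ (Definition \ref{localsystem}) gives the answer immediately; the generalisation of the Step 3 computation in \cite{Cr5} then shows that the presence of the enlarged local system does not disturb the remainder of the argument.
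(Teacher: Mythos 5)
Your overall strategy matches the paper's: expand the Kirby colour, invoke the coloured Jones model of Section \ref{S:4} for each tuple $(N_1,\ldots,N_l)$, and then argue that the quantum integers $[N_i]_\xi$ are captured by the extra circles in the supports $\mathscr F_{\bar i}^{\cN}$, $\mathscr L_{\bar i}^{\cN}$ via the $y_i$-variables of the enlarged local system, up to a $\{1\}_\xi^{-l}$ prefactor. The framing bookkeeping and the sign/monomial count for the two orange intersection points per component are also essentially right (the paper gets $y_k$ and $-y_k^{-1}$, specialising to $\{N_k\}_\xi = \{1\}_\xi [N_k]_\xi$, just as you anticipate), and your final exchange of summations is the same step the paper performs at the end.

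However, there is a genuine gap in the middle of your argument. You describe the difference between the WRT supports $\mathscr F_{\bar i}^{\cN},\mathscr L_{\bar i}^{\cN}$ and the coloured Jones supports $\mathscr F_{\bar i}^{\CC},\mathscr L_{\bar i}^{\CC}$ as consisting only of the $3l$ extra punctures and the $l$ extra arcs/circles. This omits the discrepancy in the number of configuration points: $\mathscr F_{\bar i}^{\CC}$ lives in the space of $1 + \sum_{i=1}^n (C_i-1)$ points, while $\mathscr F_{\bar i}^{\cN}$ lives in the space of $n(\cN-2)+l+1$ points. The extra $\sum_{i=1}^n (\cN-1-C_i)$ points (beyond the $l$ circle points) arise from the paper's Step II, in which each index is ``completed'' from $C_k-1$ up to $\cN-2$ by adding extra segments and configuration points on the right-hand side of the disc where the braid acts trivially. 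This is precisely what makes the ambient configuration space independent of the colouring $(C_1,\ldots,C_n)$, which is the whole point of the statement, and it requires its own argument (the paper's Proposition~\ref{eq:p2}) that the extra intersection points created by these completion segments carry trivial local-system weight and set up a bijection of multipoints. Without this intermediate comparison the two pairings are not even in the same configuration space, so the claimed factorisation has no meaning as stated.

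Relatedly, the phrase ``intersection points which do not involve the new arcs or circles reproduce verbatim the coloured Jones pairing'' is not quite coherent: the intersection points of the WRT pairing are multipoints with $n(\cN-2)+l+1$ components, and every such multipoint has components on the completion segments and on the circles. One cannot partition the intersection set into ``old'' and ``new'' points; rather, one establishes a bijection between the multipoints on both sides and shows that the associated loop $l_x$ factors as the old loop times small loops around the new features, with the latter contributing either trivially (Step II) or as $y_k$, $-y_k^{-1}$ (Step IV). The paper makes this bijection explicit in equation~\eqref{eq:4}. If you insert this intermediate step and phrase the comparison in terms of multipoints and loop factorisations rather than set-theoretic partitions of intersection points, your argument becomes the paper's proof.
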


\begin{proof} The Witten-Reshetikhin-Turaev invariant at level $\cN$ is defined using the coloured Jones polynomials of the link $L$ whose components are coloured with the Kirby colour $\Omega$ (\cite{Turaev},\cite{O}): 
\begin{equation}
\tau_{\cN}(M)=\frac{1}{\cD^b \cdot \Delta_+^{b_+}\cdot \Delta_-^{b_-}}\cdot J_{\Omega,...,\Omega}(L,\xi).
\end{equation}
This means that the invariant is given by the following linear combination of  coloured Jones polynomials, with colours less than $\cN-1$:
\begin{equation}
\begin{aligned}
\tau & _{\cN}(M)=\frac{1}{\cD^b \cdot \Delta_+^{b_+}\cdot \Delta_-^{b_-}} \cdot \sum_{1 \leq N_1,...,N_l \leq \cN-1 }[N_1]_{\xi}\cdot ... \cdot [N_l]_{\xi} \cdot J_{N_1,...,N_l}(L,\xi).
\end{aligned}
\end{equation} 
\subsection*{\bf Step I (WRT invariant as a sum of intersections in various configuration spaces)}

\

Now, we remind the topological formula for the coloured Jones polynomials, which is presented in Theorem \ref{THEOREM}:
\begin{equation}
\begin{aligned}
J_{N_1,...,N_l}(L,q)& =~ q^{ \sum_{i=1}^{l}\left( f_i- \sum_{j \neq i} lk_{i,j}\right)(N_i-1)} \cdot \\
 & \cdot \left(\sum_{\bar{i}\in C(\bar{N})} \left( \prod_{i=1}^{n}x^{-1}_{C(i)} \right)\cdot  \left\langle(\beta_{n} \cup {\mathbb I}_{n+1} ) \ { \color{red} \mathscr F_{\bar{i}}^{\CC}}, {\color{dgreen} \mathscr L_{\bar{i}}^{\CC}}\right\rangle \right)\Bigm| _{\psi^{C}_{q,N_1,...,N_l}}.
\end{aligned}
\end{equation}
We notice that the variables $x_{C(p_1)},...,x_{C(p_l)}$ correspond to the special strands of the braid $p_1,...,p_l$ which are all associated to different components of the link. More precisely, we have that:
$$(C_{p_1},...,C_{p_l})=(N_1,...,N_l)$$ as unordered families. 
We remind the notation $C_{p_i}=N_{C(p_i)}$. Further on, the variables are specialised in the following manner:
\begin{equation} 
 {\psi^{C}_{q,N_1,...,N_l}}(x_{C(p_i)})=q^{N_{C(p_i)}-1}=q^{C_{p_i}-1}, \forall i\in \{1,...,l\}.
\end{equation}
This remark allows us to encode the framing correction and we obtain the following formula:
\begin{equation}
\begin{aligned}
&J_{N_1,...,N_l}(L,q) =\\
& \hspace{-3mm}\left( \prod_{i=1}^l x_{C(p_i)}^{ \left(f_{p_i}-\sum_{j \neq {p_i}} lk_{p_i,j} \right)} \prod_{i=1}^n x_{C(i)}^{-1} 
  \cdot \sum_{\bar{i}\in C(\bar{N})} \left\langle(\beta_{n} \cup {\mathbb I}_{n+1} ) \ { \color{red} \mathscr F_{\bar{i}}^{\CC}}, {\color{dgreen} \mathscr L_{\bar{i}}^{\CC}}\right\rangle \right)\Bigm| _{\psi^{C}_{q,N_1,...,N_l}}.
\end{aligned}
\end{equation} 
(here, we used the notations from the statement of Theorem \ref{THEOREMW} concerning the framings).

This means that the $3-$manifold invariant is given by the expression presented below:
\begin{equation}\label{eq:p1}
\begin{aligned}
&\tau_{\cN}(M)= \frac{1}{\cD^b \cdot \Delta_+^{b_+}\cdot \Delta_-^{b_-}} \cdot \mathlarger{\mathlarger{\sum}}_{1 \leq N_1,...,N_l \leq \cN -1}[N_1]_{\xi}\cdot ... \cdot [N_l]_{\xi} \cdot\\
& \hspace{-5mm}\cdot \left( \prod_{i=1}^l x_{C(p_i)}^{ \left(f_{p_i}-\sum_{j \neq {p_i}} lk_{p_i,j} \right)} \prod_{i=1}^n x_{C(i)}^{-1} 
  \cdot \sum_{\bar{i}\in C(\bar{N})} \left\langle(\beta_{n} \cup {\mathbb I}_{n+1} ) \ { \color{red} \mathscr F_{\bar{i}}^{\CC}}, {\color{dgreen} \mathscr L_{\bar{i}}^{\CC}}\right\rangle \right)\hspace{-1mm}\Bigm| _{\psi^{C}_{\xi,N_1,...,N_l}}.
\end{aligned}
\end{equation} 
\subsection*{\bf Step II (Construction of homology classes in a fixed configuration space)}

\

In this part we concentrate on each intersection pairing which occurs in the above formula. They are given by:
\begin{equation}
\begin{aligned}
&\left\langle(\beta_{n} \cup {\mathbb I}_{n+1} ) \ { \mathscr F_{\bar{i}}^{\CC}}, {\mathscr L_{\bar{i}}^{\CC}}\right\rangle.\\
\end{aligned}
\end{equation}
This pairing comes from an intersection in the configuration space of $1+\sum_{i=1}^{n}(C_i-1)$ points in the $(2n+1)-$ punctured disk, and the homology classes belong to the homology groups:
$$\mathscr F_{\bar{i}}^{\CC} \in H^{-n}_{2n+1, {\scriptscriptstyle 1+\sum_{i=1}^{n}}(C_i-1)}; \ \ \ \ \ \ \mathscr L^{\CC}_{\bar{i}}\in H^{-n,\partial}_{2n+1,1+\sum_{i=1}^{n} (C_i-1)}.$$
We would like to arrive at an intersection in a configuration space where the number of particles does not depend on the individual components given by the set $\CC$. 

In order to achieve this, we use the property that all components of this set are bounded by the level of the $3-$manifold invariant, more precisely we have that:
$$0 \leq i_k\leq C_k-1 \leq \cN-2, \ \ \forall k \in \{1,...,n\}.$$

Now let us investigate the geometric supports of the two classes. For any $k$, we remark that the geometric support of $\mathscr F_{\bar{i}}^{\CC}$ has:
\begin{itemize}
\item[•] $i_k$ curves ending in the $k^{th}$ puncture
\item[•] $C_k-i_k-1$ curves ending in the $(2n-k+1)^{st}$ puncture.
\end{itemize}
Based on these remarks, we will ``complete'' each index which corresponds to a colour $C_k-1$ up to $\cN-2$. We will do this using the property that the action of the braid $(\beta_n \cup {\mathbb I}_{n+1})$ is trivial on the right hand side of the $(2n+1)$-punctured disk.  

For each $k \in \{1,...,n\}$ let us add $\cN-C_k-1$ extra segments/ configuration points on the part of the geometric supports of the classes $\mathscr F_{\bar{i}}^{\CC}$ and $\mathscr L_{\bar{i}}^{\CC}$ which end/ go around the puncture $2n-1-k$. Thanks to this change, each of the new geometric supports has in total $\cN-2$ curves/ configuration points which end/ go around symmetric punctures of the punctured disk. 

\begin{defn} (Level $\cN$ Homology classes)\\
Following this procedure, we consider the homology classes given by the geometric supports which are presented in figure \ref{Picture333}, and denote then by:
$${\color{red} F_{\bar{i}}^{\cN} \in H^{-n}_{2n+1,n(\cN-2)+1}} \ \ \ \ \ \ \ \ \  \ \ \ \ \text{ and }\ \ \ \ \ \ \ \ \ \ \ \  \  {\color{dgreen} L_{\bar{i}}^{\cN}\in H^{-n,\partial}_{2n+1,n(\cN-2)+1}}$$
\vspace{-8mm}
\begin{figure}[H]
\centering
\includegraphics[scale=0.4]{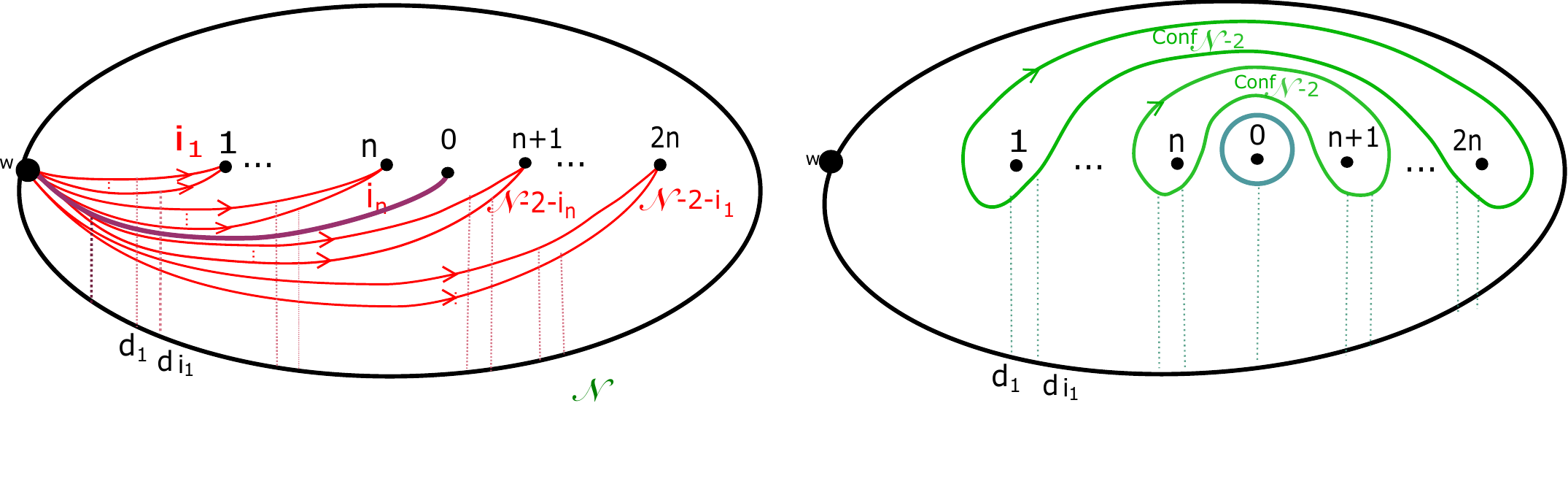}
\vspace{-5mm}
\caption{Classes corresponding to the level $\cN$ and multi-index $\bar{i}$ }
\label{Picture333}
\end{figure}
\end{defn}
Further on, we show that the change of the classes does not affect the outcome of the intersection pairing. 

\begin{prop}(Equality of intersection pairings in different configuration spaces)\label{eq:p2}\\
For any choice of indices $ \ 0 \leq i_k\leq C_k-1, k\in \{1,..,n\}$, we have the following relation between intersection pairings:
\begin{equation}
\begin{aligned}
\left\langle(\beta_{n} \cup {\mathbb I}_{n+1} ) \ { \mathscr F_{\bar{i}}^{\CC}}, {\mathscr L_{\bar{i}}^{\CC}}\right\rangle =& 
\left\langle(\beta_{n} \cup {\mathbb I}_{n+1} ) \ { F_{\bar{i}}^{\cN}}, { L_{\bar{i}}^{\cN}}\right\rangle.
\end{aligned}
\end{equation}
\end{prop}
\begin{proof}
This relation can be seen from the formula of the graded intersection form. The pairing is encoded by the intersection points between the geometric supports of the classes in the base configuration space, which are graded by certain coefficients coming from the local system. 

We denote the geometric support of a class $\cC$ by $s \hspace{0.5mm} \cC$. Further on, we notice that the following intersections in the configuration space:
\begin{equation}
\begin{aligned}
& 1) \left( (\beta_{n} \cup {\mathbb I}_{n+1} ) \ s{ \mathscr F_{\bar{i}}^{\CC}} \right) \cap {s \mathscr L_{\bar{i}}^{\CC}}\\
& 2) \left( (\beta_{n} \cup {\mathbb I}_{n+1} ) s{ F_{\bar{i}}^{\cN}}\right)\cap s{L_{\bar{i}}^{\cN}}
\end{aligned}
\end{equation}
have the same intersection points in the left hand side of the disk, and they differ by the fact that the second pair has more intersection points in the right hand side of the disk. Looking at the intersection in the configuration space in the punctured disk, this remark establishes a bijection between the intersection points from 1) and the intersection points from 2).

Let us fix an intersection point $P$ from 1) and denote by $\tilde{P}$ its correspondent in 2) .
Now, we look at the monomials which are associated to these points. The loop in the configuration space which corresponds to $\tilde{P}$ is obtained from the loop corresponding to $P$ union with another $$\sum_{k=1}^{n}(\cN-C_k-1)$$ loops which pass through the extra intersection points in the right hand side of the disk. However, we see that the extra loops are evaluated trivially by the local system since they do not twist or go around any puncture, and so they contribute with coefficients which are all $1$. This concludes that the two intersection pairings lead to the same result.
\end{proof}
Proposition \ref{eq:p2} together with formula \eqref{eq:p1} show that we can obtain WRT invariant from intersections between the new homology classes, as below:
\begin{equation}\label{eq:1'}  
\begin{aligned}
\tau_{\cN}(M)=& \frac{1}{\cD^b \cdot \Delta_+^{b_+}\cdot \Delta_-^{b_-}} \cdot \mathlarger{\mathlarger{\sum}}_{1 \leq N_1,...,N_l \leq \cN-1 }[N_1]_{\xi}\cdot ... \cdot [N_l]_{\xi} \cdot\\
& \hspace{-5mm}\cdot \left( \prod_{i=1}^l x_{C(p_i)}^{ \left(f_{p_i}-\sum_{j \neq {p_i}} lk_{p_i,j} \right)} \prod_{i=1}^n x_{C(i)}^{-1} 
  \cdot \sum_{\bar{i}\in C(\bar{N})} \left\langle(\beta_{n} \cup {\mathbb I}_{n+1} ) \ { \color{red} F_{\bar{i}}^{\cN}}, {\color{dgreen}  L_{\bar{i}}^{\cN}}\right\rangle \right)\Bigm| _{\psi^{C}_{\xi,N_1,...,N_l}}.
\end{aligned}
\end{equation} 

We arrived at a state sum model for $\tau_{\cN}$ as intersections between homology classes which are given by geometric supports in a fixed ambient manifold, namely the configuration space of $n(\cN-2)+1$ points on the $2n+1$ punctured disk. Then, the ``individual colours'' from the initial formula appear in the specialisations of coefficients and also in the quantum numbers coming from the Kirby colour. 

{\bf Encoding the coefficients of the Kirby colour} Pursuing this line, in the following parts we aim to understand geometrically the coefficients which come from the Kirby colour and encode them by intersections between the homology classes.

For the moment, we have an intersection in the $(2n+1)$-punctured disk $\cD_{2n+1}$ which takes values in the ring $\Z[x_1^{\pm 1},...,x_l^{\pm 1}, d^{\pm 1}]$ (definition \ref{D:4}). Let us look at the terms which appear in formula \eqref{eq:1'}. For a fixed set of colours $N_1,...,N_l$ we have a state sum which is given by:
 \begin{equation}
 \begin{aligned}
&[N_1]_{\xi}\cdot ... \cdot [N_l]_{\xi}  \cdot \left( \sum_{\bar{i}\in C(\bar{N})} \left\langle(\beta_{n} \cup {\mathbb I}_{n+1} ) \ { \color{red} F_{\bar{i}}^{\cN}}, {\color{dgreen}  L_{\bar{i}}^{\cN}}\right\rangle \right)\Bigm| _{\psi^{C}_{\xi,N_1,...,N_l}}=\\
& \sum_{\bar{i}\in C(\bar{N})}
[N_1]_{\xi}\cdot ... \cdot [N_l]_{\xi}  \cdot \left(  \left\langle(\beta_{n} \cup {\mathbb I}_{n+1} ) \ { \color{red} F_{\bar{i}}^{\cN}}, {\color{dgreen}  L_{\bar{i}}^{\cN}}\right\rangle \right)\Bigm| _{\psi^{C}_{\xi,N_1,...,N_l}}
\end{aligned}
\end{equation}

Now, we want to understand topologically the term 
\begin{equation}\label{eq:2}  
[N_1]_{\xi}\cdot ... \cdot [N_l]_{\xi}  \cdot \left(  \left\langle(\beta_{n} \cup {\mathbb I}_{n+1} ) \ { \color{red} F_{\bar{i}}^{\cN}}, {\color{dgreen}  L_{\bar{i}}^{\cN}}\right\rangle \right)\Bigm| _{\psi^{C}_{\xi,N_1,...,N_l}}
\end{equation}
in an unified way which does not depend on the choice of individual colours $N_1,..,N_l$. More precisely, we would like to see this term as a $\psi^{C}_{\xi,N_1,...,N_l}$ specialisation of an intersection which does not depend on the individual colours. 

\subsection*{\bf Step III (Add extra punctures to the punctured disk)}

We do this by adding $3l$ points to our punctured disk and work in $\cD_{2n+3l+1}$. In this manner we have a richer local system which carries monodromies around these additional punctures.
\begin{defn} (Homology classes using the $(2n+3l+1)$-punctured disk)\\
We consider the homology classes given by the geometric supports which are presented in figure \ref{Picture33}, in the configuration space in the $(2n+3l+1)$-punctured disk:
$${\color{red} F_{\bar{i}}^{\cN,l} \in H^{-n}_{2n+1,n(\cN-2)+1,l}} \ \ \ \ \ \ \ \ \  \ \ \ \ \text{ and }\ \ \ \ \ \ \ \ \ \ \ \  \  {\color{dgreen} L_{\bar{i}}^{\cN,l}\in H^{-n,\partial}_{2n+1,n(\cN-2)+1,l}}$$
\vspace{-8mm}
\begin{figure}[H]
\centering
\includegraphics[scale=0.25]{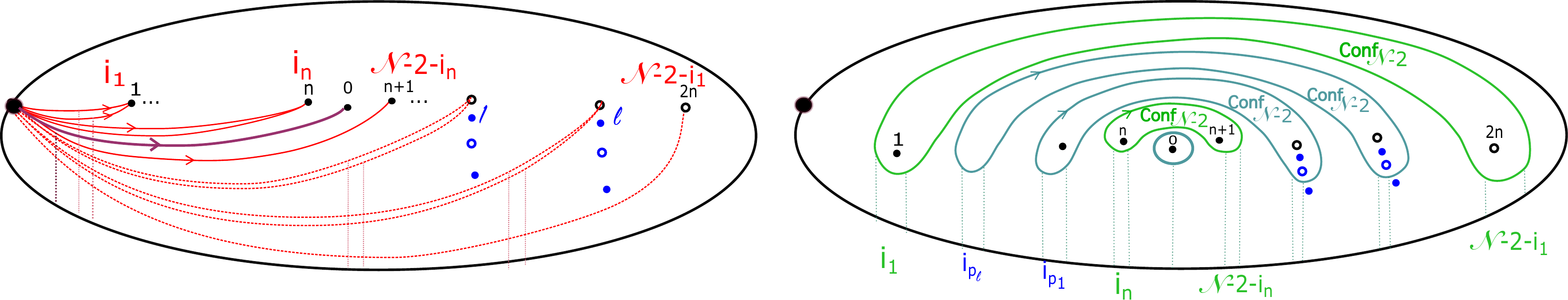}
\vspace{-5mm}
\caption{Homology Classes from the disk with extra punctures}
\label{Picture33}
\end{figure}
\end{defn}

We remind that the homologies $H^{-n}_{2n+1,n(\cN-2)+1,l}$ and $H^{-n,\partial}_{2n+1,n(\cN-2)+1,l}$ are modules over \\
$\Z[x_1^{\pm 1},...,x_l^{\pm 1},y_1^{\pm 1}..., y_l^{\pm 1}, d^{\pm 1}]$ and in the next part we will use the new variables $y_1,...,y_l$.  We remember that the monodromy of $\Phi$ around the extra blue punctures corresponding to $\bar{p}_1,...,\bar{p}_l$ is evaluated through $f_{C}$ with the variables (using definition \ref{Localsyst} and relation \eqref{eq:8}):
\begin{equation} 
\left( \begin{array}{c}
y^{2}_1 \\
y^{-2}_1 \\
y^{-1}_1
\end{array} \right)
,\ldots,
\left(\begin{array}{c}
y^{2}_l \\
y^{-2}_l \\
y^{-1}_l
\end{array}\right).
\end{equation}
The order of these evaluations might vary, but the columns correspond exactly to the above triples. Then, these monodromies get evaluated through $\psi^{C}_{\xi,N_1,...,N_l}$ to the following values:
\begin{equation} 
\left( \begin{array}{c}
\xi^{2N_1} \\
\xi^{-2N_1} \\
\xi^{-N_1}
\end{array} \right)
,\ldots,
\left(\begin{array}{c}
\xi^{2N_l} \\
\xi^{-2N_l} \\
\xi^{-N_l}
\end{array}\right).
\end{equation}
 Based on this remark, we notice that we can encode the quantum numbers using the variables $y_1,...,y_l$ and we have following relation:
\begin{equation} \label{eq:3'}
 [N_1]_{\xi}\cdot ... \cdot [N_l]_{\xi}=\{1\}^{-l}_{\xi} \left( \prod_{i=1}^{l} (y_i-y_i^{-1}) \right)|_{\psi^C_{\xi,N_1,...,N_l}}.
\end{equation}
\begin{lem}
Using this property, we obtain a new intersection pairing and the following relation holds:
 \begin{equation}\label{eq:3}  
 \begin{aligned}
& [N_1]_{\xi}\cdot ... \cdot [N_l]_{\xi}  \cdot \left( \left\langle(\beta_{n} \cup {\mathbb I}_{n+1} ) \ { \color{red} F_{\bar{i}}^{\cN}}, {\color{dgreen}  L_{\bar{i}}^{\cN}}\right\rangle \right)\Bigm| _{\psi^{C}_{q,N_1,...,N_l}}=\\
& =\{1\}^{-l}_{\xi} \left( \prod_{i=1}^{l} (y_i-y_i^{-1}) \left\langle(\beta_{n} \cup {\mathbb I}_{n+3l+1} ) \ { \color{red} F_{\bar{i}}^{\cN,l}}, {\color{dgreen}  L_{\bar{i}}^{\cN,l}}\right\rangle \right)\Bigm| _{\psi^{C}_{q,N_1,...,N_l}}
 \end{aligned}
\end{equation}
\end{lem}
\begin{proof}
Following equation \eqref{eq:3'} we have the equality of the coefficients which appear in both terms from above. Now, we notice that the addition of the extra punctures does not change the intersection forms, and so we have:
\begin{equation}
\left\langle(\beta_{n} \cup {\mathbb I}_{n+1} ) \ { \color{red} F_{\bar{i}}^{\cN}}, {\color{dgreen}  L_{\bar{i}}^{\cN}}\right\rangle=\left\langle(\beta_{n} \cup {\mathbb I}_{n+3l+1} ) \ { \color{red} F_{\bar{i}}^{\cN,l}}, {\color{dgreen}  L_{\bar{i}}^{\cN,l}}\right\rangle
\end{equation}
We can see this from the fact that the supports of the homology classes have the same intersection points, and so the only question we have concerns the gradings which they carry. The second intersection belongs to  a covering where there could be potential monodromies around the $3l$-blue punctures. However, we remark that the loops which are associated to the intersection points in the right hand side of the disk do not wind around the blue punctures and so they have trivial monodromies.
\end{proof}
 Using this Lemma together with the formula from equation \eqref{eq:1}, we conclude the following formula:
\begin{equation}\label{eq:6}
\begin{aligned}
&\tau_{\cN}(M)= \frac{\{1\}^{-l}_{\xi}}{\cD^b \cdot \Delta_+^{b_+}\cdot \Delta_-^{b_-}} \cdot \sum_{1 \leq N_1,...,N_l \leq \cN-1 } \\
& \hspace{-7mm}\hspace{-0.5mm} \left( \prod_{i=1}^l x_{C(p_i)}^{ \left(f_{p_i}-\sum_{j \neq {p_i}} lk_{p_i,j} \right)} \prod_{i=1}^n x_{C(i)}^{-1} 
  \cdot \prod_{i=1}^{l} (y_i-y_i^{-1}) \sum_{\bar{i}\in C(\bar{N})} \left\langle(\beta_{n} \cup {\mathbb I}_{n+3l+1} ) \ { \color{red} F_{\bar{i}}^{\cN,l}}, {\color{dgreen}  L_{\bar{i}}^{\cN,l}}\right\rangle \right)\hspace{-1.5mm}\Bigm| _{\psi^{C}_{\xi,N_1,...,N_l}}.
\end{aligned}
\end{equation} 
We remark that we arrived at an expression given by the graded intersections (the terms between the brackets), which do not depend anymore on the choice of colours $N_1,..,N_l$. After that we have to specialise them using the change of coefficients $\psi^{C}_{\xi,N_1,...,N_l}$.
\subsection*{\bf Step IV (Coefficients of the Kirby colour encoded by circles in the supports of the homology classes)}

\

In the last part, we will show that we can encode the coefficients of the Kirby colour by adding $l$ points to our configuration space and considering the classes which are obtained from the supports of the classes $F_{\bar{i}}^{\cN,l}$ and $L_{\bar{i}}^{\cN,l}$ by adding $l$ extra circles.

More specifically, we prove that the pairing that arises from the homology classes $\mathscr F_{\bar{i}}^{\cN}$ and $\mathscr L_{\bar{i}}^{\cN}$ captures precisely the extra coefficients from equation \eqref{eq:3}. We remind that:
$$\mathscr F_{\bar{i}}^{\cN} \in H^{-n}_{2n+1,n(\cN-2)+l+1,l} \ (\text{figure } \ref{Picture}) \ \ \ \ \ \ \  \mathscr L_{\bar{i}}^{\cN} \in H^{-n,\partial}_{2n+1,n(\cN-2)+l+1,l} \  (\text{figure } \ref{Picture2}).$$

\begin{prop} (Encoding the Kirby colour)\label{P:1} For any index $\bar{i}$ we have: 
 \begin{equation} \label{eq:5}
\left( \prod_{k=1}^{l} (y_k-y_k^{-1}) \right) \left\langle(\beta_{n} \cup {\mathbb I}_{n+3l+1} ) \ { \color{red} F_{\bar{i}}^{\cN,l}}, {\color{dgreen}  L_{\bar{i}}^{\cN,l}}\right\rangle=
 \left\langle(\beta_{n} \cup {\mathbb I}_{n+3l+1} ) \ { \color{red} \mathscr F_{\bar{i}}^{\cN}}, {\color{dgreen}  \mathscr L_{\bar{i}}^{\cN}}\right\rangle.
\end{equation}
\end{prop}
\begin{proof} For the computation of these intersections we will use the formulas for the intersection pairing presented in equation \ref{eq:1} and remark \ref{orientd}, where for each intersection point we count the product of local orientations in the disk and multiply it with the evaluation of the local system $\bar{\Phi}$ on the associated loop in the configuration space.

We notice that the classes which lead to these intersection pairings have similar geometric supports except the fact that:
\begin{itemize}
\item $s\mathscr F_{\bar{i}}^{\cN}$ is constructed from $s F_{\bar{i}}^{\cN,l}$ by adding $l$ extra circles
\item $s\mathscr L_{\bar{i}}^{\cN}$ comes from $s L_{\bar{i}}^{\cN,l}$ but it has $l$ extra points, one on each circle which goes around the punctures $p_k$ for $k\in\{1,...,l\}$.
\end{itemize}
Now, we look at the intersection points between the geometric supports which are obtained after we act with the braid:
\begin{equation}
\begin{aligned}
& 1) \left( (\beta_{n} \cup {\mathbb I}_{n+3l+1} ) s{ F_{\bar{i}}^{\cN,l}}\right)\cap s{L_{\bar{i}}^{\cN,l}}\\
& 2) \left( (\beta_{n} \cup {\mathbb I}_{n+3l+1} ) s{ \mathscr F_{\bar{i}}^{\cN}}\right)\cap s{\mathscr L_{\bar{i}}^{\cN}}.
\end{aligned}
\end{equation}
These two intersections have the same components in the left hand side of the punctured disk. The difference occurs in the right hand side of it. 

Let us denote by:
$$(q_1,r_1),...,(q_l,r_l)$$ the intersection points between the purple circles from $s{ \mathscr F_{\bar{i}}^{\cN}}$ and the blue disks from $s{\mathscr L_{\bar{i}}^{\cN}}$ which intersect them (they are the orange points from figure \ref{Picture3}). 

Also, let us look at the first pairing $1)$ and consider the set of intersection points between the supports of the homology classes:
$$\left((\beta_{n} \cup {\mathbb I}_{n+1} ) s{ F_{\bar{i}}^{\cN,l}}\right)\cap s{L_{\bar{i}}^{\cN,l}}=\{ \bar{m}_1,...,\bar{m}_s \}.$$ 
Here, each element is a multipoint in the configuration space, which has $n(\cN-2)+1$ components. 
\begin{rmk}
The set of intersection points between the new classes (from $2)$) is obtained from the above intersection points together with a choice of $l$ orange points which belong to different circles:
\begin{equation}\label{eq:4}
\left((\beta_{n} \cup {\mathbb I}_{n+3l+1} ) s{ \mathscr F_{\bar{i}}^{\cN}}\right)\cap s{ \mathscr L_{\bar{i}}^{\cN}}=\{ \bar{m}_1,...,\bar{m}_s \}\times \{q_1,r_1\}\times ... \times \{ q_l,r_l \}.
\end{equation}
\end{rmk}

\begin{figure}[H]
\centering
\includegraphics[scale=0.39]{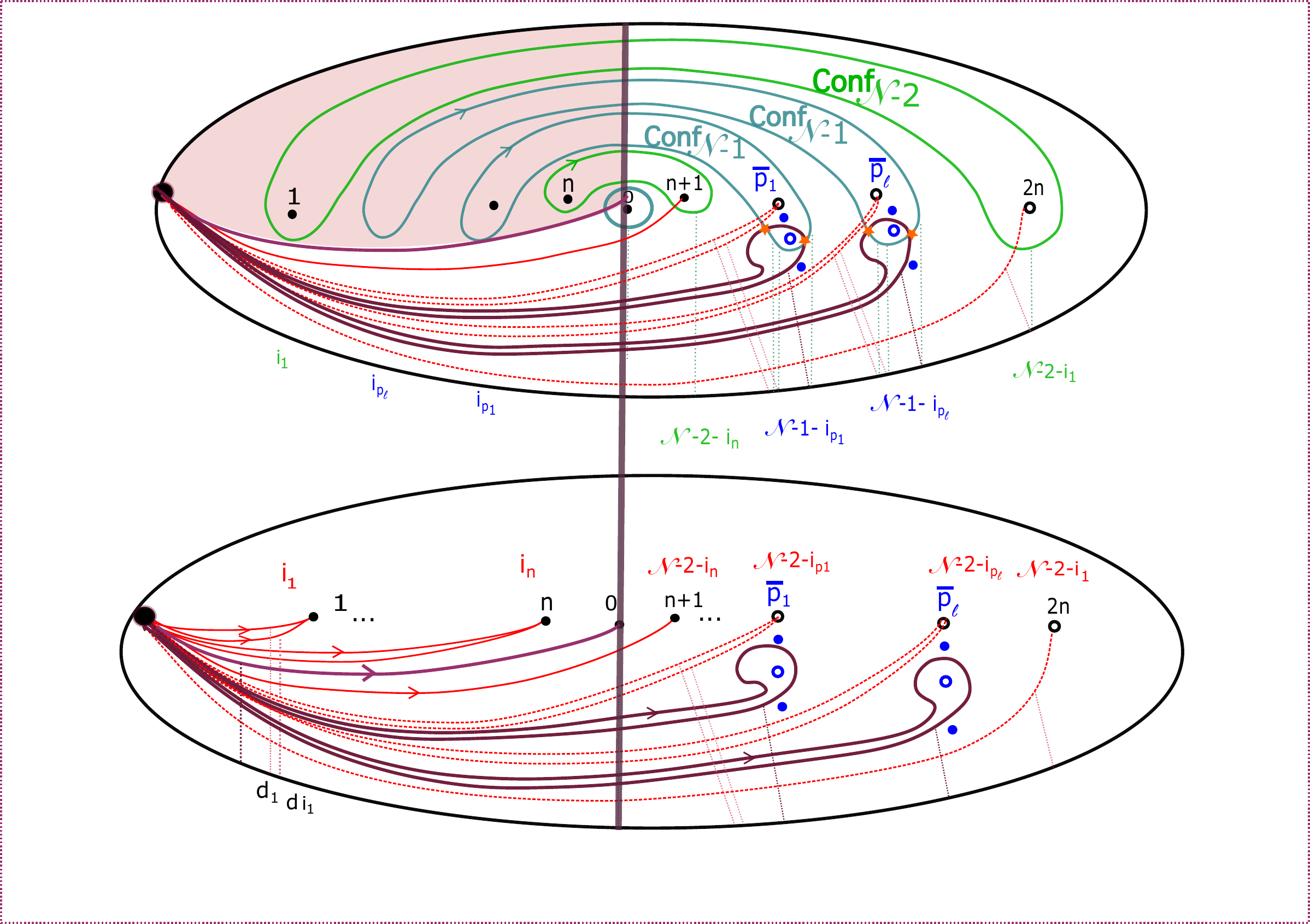}
\vspace{-15mm}
\caption{}
\label{Picture3}
\end{figure}
\begin{proof}
Let $P$ be a multipoint which belongs to the intersection $$\left((\beta_{n} \cup {\mathbb I}_{n+3l+1} ) s{ \mathscr F_{\bar{i}}^{\cN}}\right)\cap s{ \mathscr L_{\bar{i}}^{\cN}}.$$ This means that it has a exactly one component on each red curve and purple curve from $(\beta_{n} \cup {\mathbb I}_{n+3l+1} ) s{ \mathscr F_{\bar{i}}^{\cN}}$. In particular, it has exactly one point on each purple circle, which should be at the intersection with the dual support $s{ \mathscr L_{\bar{i}}^{\cN}}$. We notice that for any fixed $k\in\{1,...,l\}$ the $k^{th}$ purple circle intersects only one component from the dual support, given by the configuration space of $\cN-1$ points on the blue circle, in exactly two orange points: $\{q_k,r_k\}$. This means that $P$ has exactly one component from each of these sets with two elements. Then, for the rest of the points we should use the configuration space of $\cN-2$ points on the blue circles. The rest of the red curves intersected with the configuration spaces of $\cN-2$ particles on the circles give precisely an intersection point belonging to $$\left((\beta_{n} \cup {\mathbb I}_{n+1} ) s{ F_{\bar{i}}^{\cN,l}}\right)\cap s{L_{\bar{i}}^{\cN,l}}.$$ This procedure establishes the desired bijection. 
\end{proof}
So far, we saw the correspondence at the level of sets. Now we are interested in the coefficients coming from the local system. For this, we turn our attention towards the coefficients which are carried by the orange points. For each $k\in \{1,...,l\}$, we have look at the chosen point on the purple circle ($q_k$ or $r_k$) and to evaluate the monodromy of the yellow path corresponding this point around the punctures of the disk. These two paths are presented in picture \ref{Picture4}. 
\vspace{-2mm}
\begin{figure}[H]
\centering
\begin{subfigure}{7.5cm}
\includegraphics[scale=0.4]{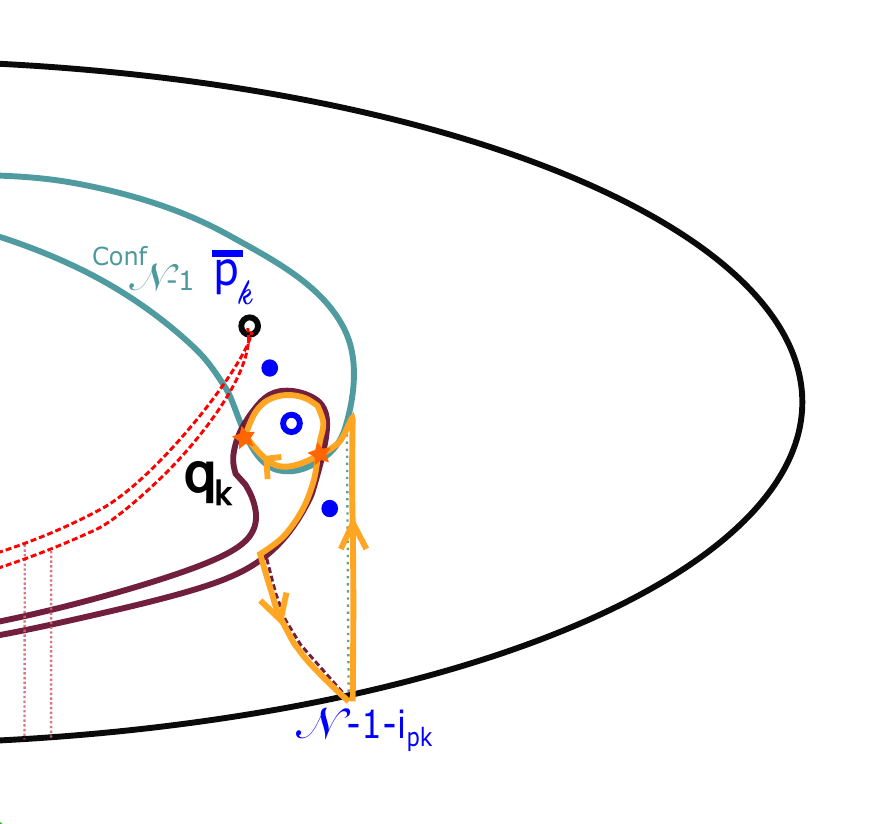}
\end{subfigure}
\begin{subfigure}{6cm}
\includegraphics[scale=0.4]{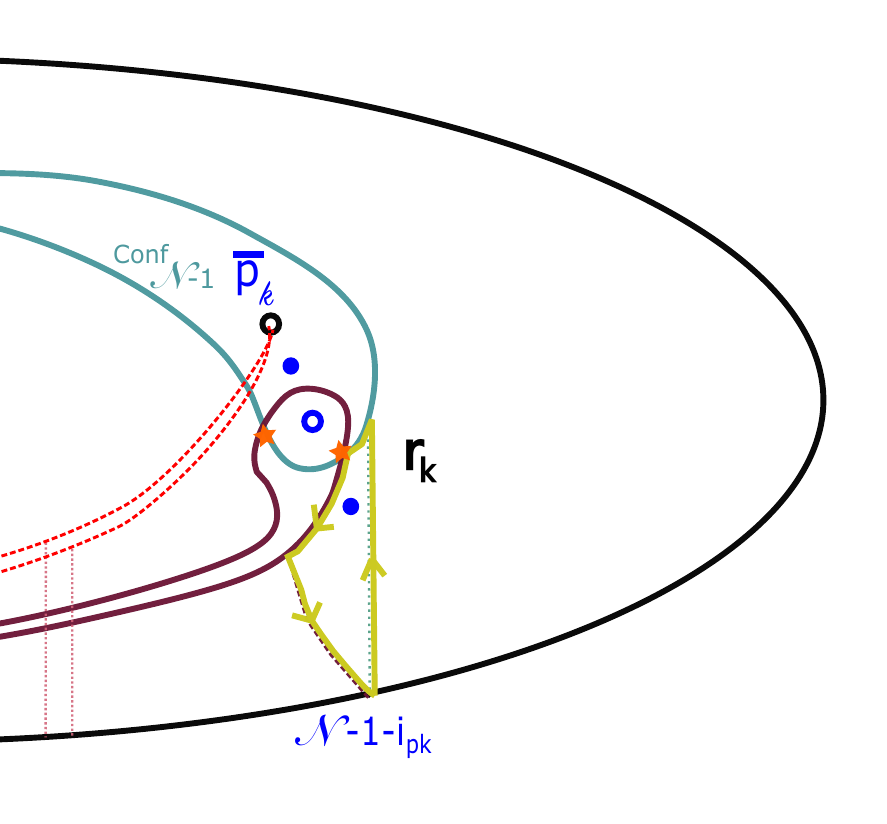}
\end{subfigure}
\caption{Paths corresponding to the intersection points}
\label{Picture4}
\end{figure}
We remind that the counter-clockwise monodromies around the three blue points are evaluated by the variables:
\begin{equation}
\left(\begin{array}{c}
y^{2}_k \\
y^{-2}_k \\
y^{-1}_k
\end{array}\right).
\end{equation}
Using this for the two paths from the picture, we see that the points $\{q_k,r_k\}$ carry the following coefficients:
\begin{equation}
\begin{aligned}
&(q_k) \ \ \ \ \  y^{2}_k\cdot y^{-1}_k=y_k\\
&(r_k) \ \ \ \ \ \ \ \ \ \ \ \ \ \   -y^{-1}_k.
\end{aligned}
\end{equation}
The yellow loops associated to the orange intersection points will not add extra $d$ components when we evaluate a loop corresponding to an intersection point in the configuration space. This is because these loops to not contribute to the relative winding in the configuration space. Also, the opposite sign comes from the local intersections in these two orange points.

We conclude that for any $k\in \{1,...,l\}$ the two points $\{q_k, r_k\}$ contribute to the grading with the coefficients $\{y_k, -y^{-1}_k\}$.
 This property together with the correspondence presented in relation \eqref{eq:4} shows that the extra coefficients that appear in the second intersection are precisely $$\prod_{k=1}^{l} (y_k-y_k^{-1})$$ which concludes the relation between the intersection pairings from \eqref{eq:5}.
\end{proof}
Now, using this property together with the expression from equation \eqref{eq:6}, we obtain:
\begin{equation}
\begin{aligned}
&\tau_{\cN}(M)= \frac{\{1\}^{-l}_{\xi}}{\cD^b \cdot \Delta_+^{b_+}\cdot \Delta_-^{b_-}} \cdot \sum_{1 \leq N_1,...,N_l \leq \cN-1 } \\
& \left( \prod_{i=1}^l x_{C(p_i)}^{ \left(f_{p_i}-\sum_{j \neq {p_i}} lk_{p_i,j} \right)} \prod_{i=1}^n x_{C(i)}^{-1} 
  \cdot  \sum_{\bar{i}\in C(\bar{N})} \left\langle(\beta_{n} \cup {\mathbb I}_{n+3l+1} ) \ { \color{red}\mathscr F_{\bar{i}}^{\cN}}, {\color{dgreen}  \mathscr L_{\bar{i}}^{\cN}}\right\rangle \right)\Bigm| _{\psi^{C}_{\xi,N_1,...,N_l}}=\\
& =\frac{\{1\}^{-l}_{\xi}}{\cD^b \cdot \Delta_+^{b_+}\cdot \Delta_-^{b_-}} \cdot \sum_{1 \leq N_1,...,N_l \leq \cN-1 } \\
&  \sum_{\bar{i}\in C(\bar{N})} \left( \prod_{i=1}^l x_{C(p_i)}^{ \left(f_{p_i}-\sum_{j \neq {p_i}} lk_{p_i,j} \right)} \prod_{i=1}^n x_{C(i)}^{-1} 
  \cdot  \left\langle(\beta_{n} \cup {\mathbb I}_{n+3l+1} ) \ { \color{red}\mathscr F_{\bar{i}}^{\cN}}, {\color{dgreen}  \mathscr L_{\bar{i}}^{\cN}}\right\rangle \right)\Bigm| _{\psi^{C}_{\xi,N_1,...,N_l}}.  
\end{aligned}
\end{equation}   
 Exchanging the two sums and taking care of the conditions which the colouring imposes on the multi-indices, we obtain the following formula:
\begin{equation}
\begin{aligned}
  &\tau_{\cN}(M) = \frac{\{1\}^{-l}_{\xi}}{\cD^b \cdot \Delta_+^{b_+}\cdot \Delta_-^{b_-}}\cdot {\Huge{\sum_{i_1,...,i_n=0}^{\cN-2}}} \\
 & \cdot \sum_{\substack{\bar{N}=(N_1,...,N_l) \\ 1 \leq N_1,...,N_l \leq \cN-1 \\ \bar{i}\in C(\bar{N})}}  \left(  \prod_{i=1}^l x_{C(p_i)}^{ \left(f_{p_i}-\sum_{j \neq {p_i}} lk_{p_i,j} \right)} \prod_{i=1}^n x^{-1}_{i} 
  \cdot   \left\langle(\beta_{n} \cup {\mathbb I}_{n+3l+1} ) \ { \color{red} \mathscr F_{\bar{i}}^{\cN}}, {\color{dgreen} \mathscr L_{\bar{i}}^{\cN}}\right\rangle \right)\Bigm| _{\psi^{C}_{\xi,N_1,...,N_l}}=\\
  & =\frac{\{1\}^{-l}_{\xi}}{\cD^b \cdot \Delta_+^{b_+}\cdot \Delta_-^{b_-}}\cdot {\Huge{\sum_{i_1,..,i_n=0}^{\cN-2}}} \left( \sum_{\substack{ \tiny 1 \leq N_1,...,N_l \leq \cN-1 \\ \bar{i}\in C(N_1,..,N_l)}}  \Lambda_{\bar{i}}(\beta_n) \Bigm| _{\psi^{C}_{\xi,N_1,...,N_l}}\right).
\end{aligned}
\end{equation} 
This relation concludes the proof of the main formula. 
\end{proof}

\begin{coro}(Detailed formula for the invariant)
\begin{equation}
\begin{aligned}
\tau & _{\cN}(M)=\frac{\{1\}^{-l}_{\xi}}{\cD^b \cdot \Delta_+^{b_+}\cdot \Delta_-^{b_-}}\cdot {\Huge{\sum_{i_1,...,i_n=0}^{\cN-2}}} \\
 & \cdot \sum_{\substack{\bar{N}=(N_1,...,N_l) \\ 1 \leq N_1,...,N_l \leq \cN-1 \\ \bar{i}\in C(\bar{N})}}  \left(  \prod_{i=1}^l x_{C(p_i)}^{ \left(f_{p_i}-\sum_{j \neq {p_i}} lk_{p_i,j} \right)} \prod_{i=1}^n x^{-1}_{C(i)} 
  \cdot   \left\langle(\beta_{n} \cup {\mathbb I}_{n+3l+1} ) \ { \color{red} \mathscr F_{\bar{i}}^{\cN}}, {\color{dgreen} \mathscr L_{\bar{i}}^{\cN}}\right\rangle \right)\Bigm| _{\psi^{C}_{\xi,N_1,...,N_l}}.
\end{aligned}
\end{equation} 
\end{coro}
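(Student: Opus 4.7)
The plan is to start from the algebraic definition of $\tau_{\cN}(M)$ as the renormalised coloured Jones polynomial of $L$ whose components are coloured by the Kirby colour $\Omega$,
\begin{equation*}
\tau_{\cN}(M)=\frac{1}{\cD^b\cdot\Delta_+^{b_+}\cdot\Delta_-^{b_-}}\sum_{1\leq N_1,\ldots,N_l\leq \cN-1} [N_1]_{\xi}\cdots [N_l]_{\xi}\cdot J_{N_1,\ldots,N_l}(L,\xi),
\end{equation*}
and then apply the coloured Jones topological model from Section \ref{S:4} to replace each coloured Jones polynomial by a graded Lagrangian intersection pairing in the covering of a configuration space of $1+\sum_{k=1}^{n}(C_k-1)$ particles in the $(2n+1)$-punctured disk, specialised by $\psi^{C}_{\xi,N_1,\ldots,N_l}$. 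This produces a double sum, but one in which both the ambient configuration space and the homology classes depend on the individual colours $N_1,\ldots,N_l$. The remaining task is to repackage this as a sum of specialisations of one globalised intersection pairing in a single configuration space.

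\textbf{First reduction: stabilising the configuration space.} The next step exploits the bounds $0\leq i_k\leq C_k-1\leq\cN-2$ to embed every colour-dependent geometric support into a uniform level-$\cN$ support. Concretely, for each $k$ I would add $\cN-C_k-1$ trivial extra arcs and configuration points on the right-hand half of the disk, ending at or circling the symmetric puncture $2n-k+1$, so that the stabilised classes $F_{\bar i}^{\cN}\in H^{-n}_{2n+1,n(\cN-2)+1}$ and $L_{\bar i}^{\cN}\in H^{-n,\partial}_{2n+1,n(\cN-2)+1}$ each carry exactly $\cN-2$ curves around every pair of symmetric punctures. Since the braid $\beta_n\cup\mathbb I_{n+1}$ is the identity on the right half of the disk, the extra intersection points fall in a region where no puncture is circled and no pair of particles winds around another; so the loops associated to them by $\Phi$ all evaluate to $1$, and the intersection pairing is unaffected by the stabilisation.

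\textbf{Encoding the Kirby colour geometrically.} The essential new ingredient is to absorb the factors $[N_1]_{\xi}\cdots[N_l]_{\xi}$ into the intersection pairing itself. I plan to exploit the identity $[N_k]_{\xi}=\{1\}_{\xi}^{-1}(y_k-y_k^{-1})\big|_{\psi^{C}_{\xi,N_1,\ldots,N_l}}$, which suggests enlarging the punctured disk to $2n+3l+1$ punctures by inserting, below each symmetric point $\bar p_k$, a triple of blue punctures whose monodromies are evaluated by $\Phi$ through $(y_k^{2},y_k^{-2},y_k)$. This buys a richer local system without changing the intersection numbers (no new loops are created). To internalise $(y_k-y_k^{-1})$ into the pairing itself, I then decorate the supports by a small purple circle around each special puncture in $\mathscr F_{\bar i}^{\cN}$ and one extra configuration point on the corresponding blue arc in $\mathscr L_{\bar i}^{\cN}$.

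\textbf{The main obstacle.} What will require real care is proving, for each $k$, the local identity that each new purple circle meets the corresponding blue arc in exactly two transverse ``orange'' points $(q_k,r_k)$, that the full intersection factorises as a cartesian product of the old intersection set with $\prod_k\{q_k,r_k\}$, and that the two orange points contribute the monomials $y_k$ and $-y_k^{-1}$ respectively under $\Phi$. I expect this to follow from a local calculation around the blue triple using the loop-and-sign prescription of Proposition \ref{P:3}: the two yellow paths pick up exactly $y_k$ and $-y_k^{-1}$, with no $d$-contribution because no winding occurs in the configuration. The sign discrepancy is accounted for by the opposite orientations of the local transverse intersections. Granted this, one obtains
\begin{equation*}
\left(\prod_{k=1}^{l}(y_k-y_k^{-1})\right)\left\langle(\beta_n\cup\mathbb I_{n+3l+1})\,F_{\bar i}^{\cN,l},L_{\bar i}^{\cN,l}\right\rangle=\left\langle(\beta_n\cup\mathbb I_{n+3l+1})\,\mathscr F_{\bar i}^{\cN},\mathscr L_{\bar i}^{\cN}\right\rangle,
\end{equation*}
after which the framing correction $\prod_{i=1}^{l}x_{p_i}^{f_{p_i}-\sum_{j\neq p_i}lk_{p_i,j}}$ and the pivotal correction $\prod_{i=1}^{n}x^{-1}_{C(i)}$ combine with the intersection pairing to form $\Lambda_{\bar i}(\beta_n)$. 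Finally, exchanging the order of the sums over $\bar N$ and $\bar i$, subject to the condition $\bar i\in C(\bar N)$, yields the announced state sum formula.
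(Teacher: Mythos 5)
Your proposal is correct and follows essentially the same four-step route the paper itself takes: (I) expand $\tau_{\cN}(M)$ via the Kirby-coloured Jones polynomial and substitute the Section~\ref{S:4} topological model; (II) stabilise each colour-dependent support to a uniform level-$\cN$ support using the bound $C_k-1\leq\cN-2$ and the triviality of $\beta_n\cup\mathbb I_{n+1}$ on the right half of the disk; (III) enlarge to the $(2n+3l+1)$-punctured disk with the blue triples evaluated via $(y_k^{2},y_k^{-2},y_k)$; and (IV) absorb $[N_1]_{\xi}\cdots[N_l]_{\xi}=\{1\}_\xi^{-l}\prod_k(y_k-y_k^{-1})|_{\psi}$ by the purple-circle/orange-point local calculation giving coefficients $\{y_k,-y_k^{-1}\}$, then swap the order of summation over $\bar N$ and $\bar i$. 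This is precisely the content of Steps~I--IV and Proposition~\ref{P:1} in the paper's argument, so no meaningful divergence to report.
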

\section{Topological model for the WRT invariants of $3$-manifols obtained as surgeries along knots}\label{S:6}
This section is devoted to the topological model presented above, for the particular case where the link is actually a knot (this means that $l=1$). Let us consider a knot $K$ which is the closure of a braid with $n$ strands $\beta_n \in B_n$.

In this case, we work in the covering of the configuration space of $n(\cN-2)+2$ particles in the punctured disk with $2n+4$ punctures, and we will use the homology groups:
$$H^{-n}_{2n+1, n(\cN-2)+2,1} \ \ \ \text{ and }\ \ \ \ H^{-n,\partial}_{2n+1,n(\cN-2)+2,1} \ \ \ \text{ as } \Z[x^{\pm 1},y^{\pm 1}, d^{\pm 1}]- \text{modules}.$$
This means that we have $3$ privileged blue points in the punctured disk. \subsection{Homology classes}
\begin{defn} a) (First Homology class) For any set $i_1,...,i_{n} \in \{0,...,\cN-2\}$ we consider the class given by the geometric support from the figure below:
$${\color{red} \mathscr F_{\bar{i}}^{\cN} \in H^{-n}_{2n+1,n(\cN-2)+2,1}}$$
\vspace{-10mm}
\begin{figure}[H]
\centering
\includegraphics[scale=0.3]{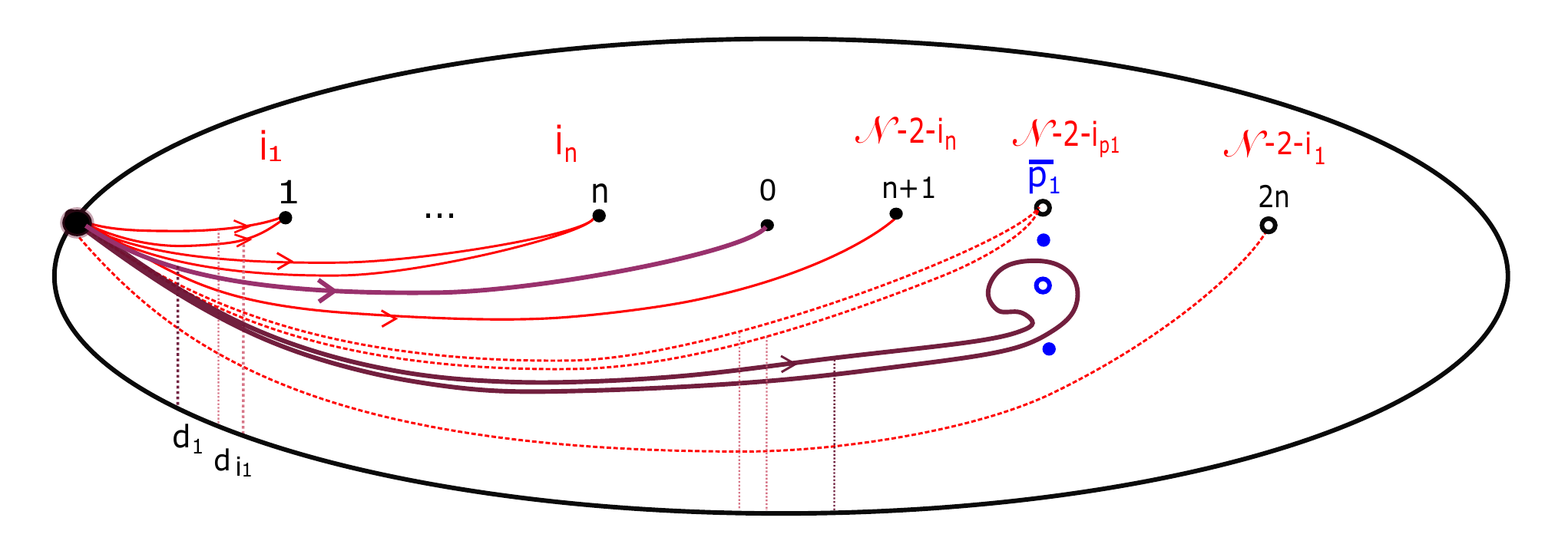}
\caption{}
\end{figure}
b) (Second Homology class) The second homology class is given by the following geometric support:
 $${\color{dgreen} \mathscr L_{\bar{i}}^{\cN} \in H^{-n,\partial}_{2n+1,n(\cN-2)+2,1}}$$
\vspace{-10mm}
\begin{figure}[H]
\centering
\includegraphics[scale=0.3]{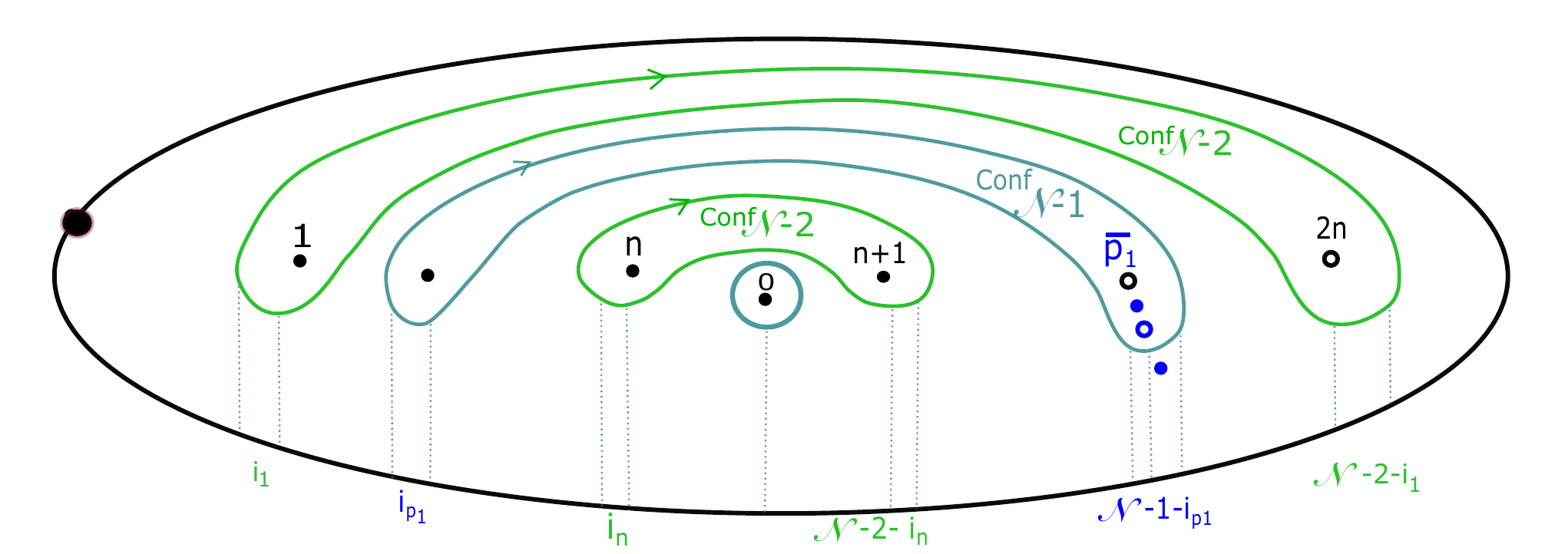}
\caption{}
\end{figure}
\end{defn}
Further on, we will use the specialisation of coefficients which corresponds to a coloring with one color $N \in \N$, which is given by:
$$ \psi^{C}_{q,N}: \Z[x^{\pm 1},y^{\pm 1},d^{\pm 1}] \rightarrow \Z[q^{\pm 1}]$$
\begin{equation}
\begin{cases}
&\psi^{C}_{q,N}(x)=q^{N-1},\\
&\psi^{C}_{q,N}(y_i)=q^{N}\\
&\psi^{C}_{q,N}(d)=q^{-2}.
\end{cases}
\end{equation} 

\begin{coro}(Topological model for the Witten-Reshetikhin-Turaev invariants of knot surgeries)\\
Let $M$ be a closed oriented $3$-manifold obtained by surgery along a knot $K$ with framing $f \in \Z$. We choose a braid $\beta_n$ such that $K=\widehat{\beta_n}$. Further on, for $i_1,...,i_{n} \in  \{0,...,\cN-2\}$, we consider the following Lagrangian intersection:
\begin{equation}
\begin{cases}
& \Lambda_{\bar{i}}(\beta_n) \in \Z[x^{\pm 1},y^{\pm 1}, d^{\pm 1}]\\
& \Lambda_{\bar{i}}(\beta_n):=x^{ f-w(\beta_n)} \cdot x^{-n} 
  \   \left\langle(\beta_{n} \cup {\mathbb I}_{n+4} ) \ { \color{red} \mathscr F_{\bar{i}}^{\cN}}, {\color{dgreen} \mathscr L_{\bar{i}}^{\cN}}\right\rangle \end{cases}.
\end{equation} 
Here, $w(\beta_n)$ is the writhe of the braid.
Then the $\cN^{th}$ Witten-Reshetikhin-Turaev invariant is obtained from these intersections as below:
\begin{equation}
\begin{aligned}
\tau  _{\cN}(M)=\frac{\{1\}^{-1}_{\xi}}{\cD^b \cdot \Delta_+^{b_+}\cdot \Delta_-^{b_-}}\cdot {\Huge{\sum_{i_1,..,i_n=0}^{\cN-2}}}   \left(\sum_{N=\text{max}\{i_1+1,...,i_n+1\}}^{\cN-1}  \Lambda_{\bar{i}}(\beta_n) \Bigm| _{\psi^{C}_{\xi,N}}\right).
\end{aligned}
\end{equation} 
\end{coro}

\begin{rmk}This tells as that the level $\cN$ WRT invariant of a surgery along a knot which is the closure of a braid with $n$ strands is obtained from states of graded intersections in the configuration space of $n(\cN-2)+2$ points in the $(2n+4)-$punctured disk.
\end{rmk}

\noindent {\itshape Mathematical Institute, University of Oxford, Woodstock Road, Oxford, OX2 6GG, United Kingdom}

\noindent {\tt palmeranghel@maths.ox.ac.uk}

\noindent \href{http://www.cristinaanghel.ro/}{www.cristinaanghel.ro}

\end{document}